\newtheorem{theorem}{Theorem}[section]
\newtheorem{remark}[theorem]{Remark}
\newtheorem{lemma}[theorem]{Lemma}
\newtheorem{proposition}[theorem]{Proposition}
\newtheorem{corollary}[theorem]{Corollary}
\newtheorem{conjecture}{Conjecture}[section]
\numberwithin{equation}{section}
\newtheorem{definition}[theorem]{Definition}
\def\Ext{\mathrm{Ext}}
\def\hom{\mathrm{Hom}}
\def\mh{\mathfrak{h}}
\def\b{\mathfrak{b}}
\newcommand{\C}{\mathbb C}
\newcommand{\Z}{\mathbb{Z}}
\newcommand{\cd}{\mathfrak{D}}
\newcommand{\W}{\mathfrak{W}}
\newcommand{\mb}{\mathfrak{b}}
\title[BGG  Category]{\bf  Category  $\mathcal{O}$  for the Lie algebra of vector fields on the line}
\author{Genqiang Liu and Mingjie Li}
\date{\today}
\begin{document}
\begin{abstract} Let $\mathfrak{W}$ be the Lie algebra of vector fields on the line. Via computing extensions between all simple modules in the category  $\mathcal{O}$, we  give the block decomposition of $\mathcal{O}$, and show that the representation type of each block of $\mathcal{O}$ is wild using the Ext-quiver. Each block of $\mathcal{O}$ has infinite simple objects. This result is very different from that of  $\mathcal{O}$ for  complex semisimple Lie algebras. To find a connection between $\mathcal{O}$ and the module category over some associative algebra, we define a subalgebra $H_1$ of $U(\mb)$.
We give an exact functor   from $\mathcal{O}$ to the category  $\Omega$ of finite dimensional modules over $H_1$.  We also construct new simple $\mathfrak{W}$-modules from Weyl modules and modules over the Borel subalgebra $\mathfrak{b}$ of $\mathfrak{W}$.
\end{abstract}
\vspace{5mm}
\maketitle

\noindent{{\bf Keywords:}  Category  $\mathcal{O}$,  block, Ext-quiver, wild, Whittaker module.}
\vspace{2mm}

\noindent{{\bf Math. Subj. Class.} 2020: 17B10, 17B30, 17B80}

\section{introduction}

The category $\mathcal{O}$   for complex semisimple Lie algebras was
introduced by  Joseph Bernstein, Israel Gelfand and Sergei Gelfand in the early 1970s, see \cite{BGG},  and it includes all highest weight modules.
This category is very important in the representation theory.  For more details on category $\mathcal{O}$,  one can see the
 monograph \cite{Hu}.

The category $\mathcal{O}$ can be defined for any Lie algebra with a triangular decomposition, see the book \cite{MP}.
For a Lie algebra $\mathfrak{g}$ with a triangular decomposition $\mathfrak{g}=\mathfrak{g}^-\oplus\mathfrak{h}\oplus \mathfrak{g}^+$, there  always exists an anti-involution $\sigma$ of $\mathfrak{g}$ such that $\sigma(\mathfrak{g}^+)=\mathfrak{g}^-$ and $\sigma|_{\mathfrak{h}}=\text{id}_{\mathfrak{h}}$. For example, the finite dimensional simple Lie algebras, the Virasoro algebra, the affine Kac-Moody algebras, the Heisenberg Lie algebras are all  Lie algebras with triangular decompositions, see \cite{MP}. For the Kac-Moody algebras and the Virasoro algebra, categories $\mathcal{O}$ were studied in \cite{F,BNW2} and references therein.  For these algebras, the Hom-spaces between Verma modules determine  the block decomposition of
the  category $\mathcal{O}$ to a great extent.

For the  Lie algebra $\mathfrak{W}=\mathfrak{W}^-\oplus \mh\oplus\mathfrak{W}^+$  of vector fields on the line, $\mathfrak{W}^-$ is one dimensional, $\mathfrak{W}^+$ is infinite dimensional. So $\mathfrak{W}$ is not a  Lie algebra with a triangular decomposition in the sense of \cite{MP}. Although we can also define the category $\mathcal{O}$ for
$\mathfrak{W}$ similar as that of complex semisimple Lie algebras, however several properties for $\mathcal{O}$  in \cite{MP} dose not hold
for $\mathfrak{W}$. For example,  the embeddings between Verma modules has little impact on the block decomposition of $\mathcal{O}$. Our initial motivation of the present paper was to explore the differences between the category $\mathcal{O}$ of $\W$ and the categories $\mathcal{O}$ of semi-simple Lie algebras.
In this paper, through  giving  extensions between all simple modules in $\mathcal{O}$, we obtain the block decomposition of the category  $\mathcal{O}$ for $\W$, and study the representation type of each block of $\mathcal{O}$. We also detect the relation between $\mathcal{O}$ and the module category for some associative algebra, and construct simple $\W$-modules  from modules over the Weyl algebra and modules over the Borel subalgebra $\mb$ of $\W$.

 The paper is organized as follows. In Section 2, we introduce the
 category $\mathcal{O}$  for the Lie algebra $\W$.
 In Section 3, we first study the Verma modules and recall extensions on the $\W$-modules $F_{\lambda}$ of Feigin and Fuchs defined in \cite{FF}. Then using these extensions and the duality between $F_{\lambda}$ and the Verma module $\Delta(\lambda)$, we can give  all nontrivial extensions  between Verma modules in $\mathcal{O}$. Consequently, we obtain
 $\text{Ext}^1_{\mathcal{O}}(M,N)$  for all simple  modules $M, N\in \mathcal{O}$, see Theorem \ref{the}. It should be mentioned that extensions between simple modules for the finite dimensional Witt algebra $W(1, 1)$ over an algebraically closed field of characteristic $p>3$  were determined in \cite{BNW1}.
  Furthermore we give the block decomposition $\mathcal{O} =\oplus_{\lambda\in \C/\Z}\mathcal{O}_{[\lambda]}$, and show that each block $\mathcal{O}_{[\lambda]}$ is wild by studying a sub-quiver of its
 Ext-quiver, see Theorem \ref{wild1}. Let $H_1=\{u\in U(\mb)\mid u(d_{-1}-1)\subset (d_{-1}-1)U(\W)\}$
which is a subalgebra of $U(\mb)$. Moreover $H_1$ is isomorphic to the endomorphism algebra of an induced right $U(\mathfrak{W})$-module $Q'_1$ which is the universal Whittaker module defined in \cite{ZL}.  In subsection \ref{functor}, we construct a functor $\Gamma$ from $\mathcal{O}$ to the category  $\Omega_1
$ of finite dimensional $H_1$-modules. We show that $\Gamma$ is an exact functor. At the end of Section 3, we also conjecture that some non-integral block  $\mathcal{O}_{[\lambda]}$ may be equivalent to some
subcategory of  $\Omega_1$.
 In Section 4, we construct new simple tensor $\W$-modules $T(P,V)$ from modules $P$ over the Weyl algebra and $\mb$-modules $V$. The isomorphism criterion for $T(P,V)$ is also given.

\section{Preliminaries}

In this paper, we denote by $\Z$, $\Z_{>0}$, $\Z_{\geq 0}$ and $\C$ the sets of integers, positive integers, nonnegative
integers and complex numbers, respectively. All vector spaces and Lie algebras are over $\C$. For a Lie algebra
$\mathfrak{g}$ we denote by $U(\mathfrak{g})$ its universal enveloping algebra. We write $\otimes$ for
$\otimes_{\mathbb{C}}$.

\subsection{Witt algebra}

Let $A=\C[x]$ be the polynomial algebra
and $\mathfrak{W}$  the derivation Lie algebra of $A$,
i.e., $\mathfrak{W}=\text{Der}_\C A$.
The Lie algebra $\mathfrak{W}$ is called the Lie algebra of vector fields on the line, or the Witt algebra of rank one. Denote
$\partial=\frac{\partial}{\partial x}$ and $d_i=x^{i+1}\partial$, for any $i\in \Z_{\geq -1}$.
Then $\{d_i\mid i\in \Z_{\geq -1}\}$ is a basis of $\mathfrak{W}$.
We can write the Lie bracket in $\mathfrak{W}$  as follows:
$$[d_i,d_j]=(j-i)d_{i+j}, \ \text{for all}\  i,j \in\Z_{\geq -1} . $$
Note that the subspace $\mh=\C d_0$ is a Cartan subalgebra of $\mathfrak{W}$, i.e.,
a maximal
abelian subalgebra that is diagonalizable on $\mathfrak{W}$ with respect to the adjoint action. Let $\mathfrak{W}^+=\text{span}\{d_i\mid i\in \Z_{>0}\}$ and $\mathfrak{W}^-=\C d_{-1}$. Then
$\mathfrak{W}=\mathfrak{W}^-\oplus \mh\oplus\mathfrak{W}^+$ is a  decomposition of $\mathfrak{W}$, and the Lie subalgebra $\b:=\mh\oplus \mathfrak{W}^+$ is  called a Borel subalgebra of $\mathfrak{W}$.

One can see  that $\Phi=\{\varepsilon_{-1}, \varepsilon_1, \varepsilon_2, \cdots\}$
  is the root system of $\mathfrak{W}$, where $\varepsilon_i\in\mh^*$ such that
  $\varepsilon_i(d_0)=i$, $i\in \Z_{\geq -1}$. The subalgebra $\C d_{-1}\oplus\C d_0\oplus \C d_1 \cong \mathfrak{sl}_2$, and $z=-d_1d_{-1}+d_0^2-d_0$ is its Casimir element, i.e., $z$ is a central element in $U(\mathfrak{sl}_2)$.

\begin{definition}A left $U(\mathfrak{W})$-module  $M$ is called a {\it weight module} if $d_0$ acts diagonally on  $M$, i.e.,
$$ M=\oplus_{\lambda\in \C} M_\lambda,$$
where $M_\lambda:=\{v\in M \mid d_0v=\lambda v \}.$  For a weight module $M$,   denote $$\mathrm{Supp}(M):=\{\lambda\in \C \mid M_\lambda\neq0\}.$$
\end{definition}

If $M$ is a simple weight  $\W$-module, then $\text{Supp}(M)\subset \lambda +\Z$ for some $\lambda\in \C$. For a $\lambda\in\mathrm{Supp}(M)$, a nonzero vector
$v\in M_\lambda$ is called a maximal  vector if $\mathfrak{W}^+v=0$.  A weight module  is called a highest weight module if it is generated by
 a maximal weight vector.

We use $U(\W)$-$\text{Mod}$ to denote the category of all left $U(\W)$-modules.

\subsection{Category $\mathcal{O}$ }
Next we introduce the   category $\mathcal{O}$  for $\W$.
 \begin{definition}\label{o-def}
The   category $\mathcal{O}$  for $\W$ is a full subcategory of $U(\W)$-$\text{Mod}$
whose objects are $\W$-modules $M$ satisfying the following axioms:
 \begin{enumerate}[$($a$)$]
 \item $M$ is a finitely generated  $U(\W)$-module;
\item  $M$ is a weight module;
\item $M$ is locally $\W^+$-finite: for each $v\in M$, the subspace $U(\W^+)v$ is finite dimensional.
\end{enumerate}
\end{definition}

Let $M$ be a module in $\mathcal{O}$. By (a) and (c) in Definition \ref{o-def}, we can assume that $M$ is generated by a finite dimensional $U(\W^+)$-module $N$. By induction on the dimension of $N$,  we can show that $M$ has the following property.

\begin{lemma}\label{filtration}Any module $M$ in $\mathcal{O}$ has  a finite filtration of submodules as follows:
$$0 = M_0 \subset  M_1\subset \cdots  \subset M_m = M,$$
where each factor $M_j/M_{j-1}$ for $1 \leq j\leq m$ is a highest weight module.
\end{lemma}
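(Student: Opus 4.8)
The plan is to follow the reduction indicated just before the statement: since $M$ is finitely generated and locally $\W^+$-finite, I would first exhibit a finite-dimensional generating subspace that is stable under $\b=\mh\oplus\W^+$, and then peel off highest weight submodules one at a time by induction on the dimension of that subspace.

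First I would choose finitely many \emph{weight} vectors $v_1,\dots,v_k$ generating $M$, which is possible by axioms (a) and (b), and set $N=\sum_{i=1}^{k}U(\W^+)v_i$. Axiom (c) makes each $U(\W^+)v_i$ finite dimensional, so $\dim N<\infty$, while $M=U(\W)N$ by construction. Because each $d_j$ with $j>0$ raises the $d_0$-weight by $j$, the space $N$ is a sum of weight spaces, hence $\mh$-stable; combined with its $\W^+$-stability this makes $N$ a finite-dimensional $\b$-submodule. Since $\mathrm{Supp}(N)$ is then a finite set, I would pick $\lambda\in\mathrm{Supp}(N)$ of maximal real part and any $0\neq v\in N_\lambda$. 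For $j>0$ the vector $d_jv$ lies in $N$ (by $\W^+$-stability) and has weight $\lambda+j\notin\mathrm{Supp}(N)$, forcing $d_jv=0$; thus $v$ is a maximal vector, and $M_1:=U(\W)v$ is a highest weight module. This provides the engine of the induction, and it already settles the base case $\dim N=1$, where $N=\C v$ and $M=U(\W)v$ is itself a highest weight module.

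For the inductive step I would pass to $\overline{M}:=M/M_1$, with quotient map $\pi\colon M\twoheadrightarrow\overline{M}$. It is generated by $\overline{N}:=(N+M_1)/M_1$, which is a finite-dimensional $U(\W^+)$-module. The decisive point is that $\dim\overline{N}<\dim N$: since $0\neq v\in N\cap M_1$, the restriction $\pi|_N\colon N\to\overline{N}$ has nontrivial kernel, so the dimension drops strictly. Applying the induction hypothesis to $\overline{M}$ yields a finite filtration $0=\overline{M}_0\subset\overline{M}_1\subset\cdots\subset\overline{M}_r=\overline{M}$ with highest weight quotients. Setting $M_{s+1}:=\pi^{-1}(\overline{M}_s)$ for $0\leq s\leq r$ pulls this back to $0=M_0\subset M_1\subset\cdots\subset M_{r+1}=M$, whose first quotient is the highest weight module $M_1$ and whose remaining quotients satisfy $M_{s+1}/M_s\cong\overline{M}_s/\overline{M}_{s-1}$, again highest weight.

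I expect the only genuine work to be the bookkeeping that justifies the induction, namely that $\overline{M}$ is again generated by a finite-dimensional $U(\W^+)$-module of strictly smaller dimension. This rests on the elementary facts that a quotient of a finitely generated, weight, locally $\W^+$-finite module retains all three properties, and on the observation above that $v$ contributes to the kernel of $\pi|_N$; once these are in place the filtration assembles formally.
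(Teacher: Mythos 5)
Your proof is correct and follows exactly the route the paper sketches: generate $M$ by a finite-dimensional $U(\W^+)$-module $N$ of weight vectors and induct on $\dim N$, splitting off a highest weight submodule generated by a vector of maximal weight. The paper leaves this as a one-line sketch, and your write-up supplies precisely the missing details (the $\b$-stability of $N$, the strict dimension drop for $\overline{N}$, and the pullback of the filtration) without deviating from its strategy.
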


So highest weight modules are basic constituents of
$\mathcal{O}$.

\section{Block decomposition of $\mathcal{O}$ }

In this section, we study extensions between Verma modules and simple modules in $\mathcal{O}$. Using the Ext-quiver, we show that each block of $\mathcal{O}$ has wild representation type. We also construct an exact functor  from $\mathcal{O}$ to  the  category $\Omega_1$ of finite dimensional modules over $H_1$.

\subsection{The Verma modules}
For a $\lambda\in \mathbb{C}$,  denote by $\mathbb{C}_{\lambda}$ the one-dimensional $\mathfrak{b}$-module
with the generator $v_{\lambda}$ and the action given by
\begin{displaymath}
\W^+v_{\lambda}=0, \ \  d_0 v_{\lambda}=\lambda v_\lambda.
\end{displaymath}

The {\em Verma module} over $\W$ is defined as follows:
\begin{displaymath}
\Delta( \lambda):=\mathrm{Ind}_{\b}^{\W}\mathbb{C}_{\lambda}\cong
U(\W)\bigotimes_{U(\b)}\mathbb{C}_{\lambda}.
\end{displaymath}

The  module $\Delta( \lambda)$ has  the
 unique simple quotient module $L( \lambda)$. By Lemma \ref{filtration}, the modules $L(\lambda)$  for $\lambda\in \C$ provide a complete set of irreducible modules in category $\mathcal{O}$.

\begin{lemma}\label{Verma}
\begin{enumerate}[$($1$)$]
\item $z v=\lambda(\lambda+1)v$ for all $v\in \Delta(\lambda)$.
\item The module $\Delta(\lambda)$ is simple  if and only  if $\lambda\neq 0$. So $ \Delta(\lambda) =L(\lambda)$ for $\lambda \neq 0$.
\item The module $\Delta(0)$ is a uniserial module whose structure can be described by the following exact sequence: \begin{equation}\label{seq}
0 \rightarrow \Delta(-1) \xrightarrow{\alpha} \Delta(0) \xrightarrow{\beta}  L(0) \rightarrow 0.
\end{equation}
\end{enumerate}
\end{lemma}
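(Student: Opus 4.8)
The plan is to treat the three parts in order, extracting everything from the PBW basis $\{d_{-1}^n v_{\lambda} : n\ge 0\}$ of $\Delta(\lambda)$, where $v_{\lambda}=1\otimes v_{\lambda}$ and, since $[d_0,d_{-1}]=-d_{-1}$, the vector $w_n:=d_{-1}^n v_{\lambda}$ has weight $\lambda-n$. In particular every weight space of $\Delta(\lambda)$ is one-dimensional, which is the workhorse of the whole argument.

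For part (1), I would first compute $z v_{\lambda}$ directly. Since $d_1 v_{\lambda}=0$, we have $d_1 d_{-1} v_{\lambda}=[d_1,d_{-1}]v_{\lambda}=-2d_0 v_{\lambda}=-2\lambda v_{\lambda}$, whence $z v_{\lambda}=(-d_1 d_{-1}+d_0^2-d_0)v_{\lambda}=(2\lambda+\lambda^2-\lambda)v_{\lambda}=\lambda(\lambda+1)v_{\lambda}$. Because $z$ is central in $U(\mathfrak{sl}_2)$ and $d_{-1}\in\mathfrak{sl}_2$, the element $z$ commutes with $d_{-1}$; as $\Delta(\lambda)=\mathbb{C}[d_{-1}]v_{\lambda}$, it follows that $z\,d_{-1}^n v_{\lambda}=d_{-1}^n z\, v_{\lambda}=\lambda(\lambda+1)d_{-1}^n v_{\lambda}$, so $z$ acts by the scalar $\lambda(\lambda+1)$ on all of $\Delta(\lambda)$.

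For part (2), the key point is that any proper nonzero submodule contains a maximal vector, and since weight spaces are one-dimensional every maximal vector is a scalar multiple of some $w_n$ with $n>0$. As $\W^+$ is generated as a Lie algebra by $d_1$ and $d_2$ (using $[d_1,d_k]=(k-1)d_{k+1}$), it suffices to find when $d_1 w_n=d_2 w_n=0$. A short induction from $d_0 d_{-1}^m=d_{-1}^m(d_0-m)$ gives $d_1 w_n=-n(2\lambda-n+1)w_{n-1}$, so the $d_1$-condition forces $n=2\lambda+1$. Writing $d_2 w_m=c_m w_{m-2}$ and commuting $d_2$ past $d_{-1}$ via $[d_2,d_{-1}]=-3d_1$ yields the recursion $c_m=c_{m-1}+3(m-1)(2\lambda-m+2)$ with $c_0=c_1=0$; substituting $2\lambda=n-1$ and summing gives $c_n=\tfrac12(n-1)n(n+1)$. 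Since $n>0$ this vanishes only at $n=1$, i.e. $\lambda=0$. Hence for $\lambda\neq 0$ there is no maximal vector of weight below $\lambda$ and $\Delta(\lambda)$ is simple, so $\Delta(\lambda)=L(\lambda)$. I expect this elimination step to be the main obstacle: the $d_1$-condition alone already admits the whole family $\lambda\in\tfrac12\mathbb{Z}_{\ge 0}$, and only the $d_2$-condition cuts it down to $\lambda=0$.

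For part (3), when $\lambda=0$ the vector $w_1=d_{-1}v_0$ is maximal of weight $-1$ (one checks $d_1 w_1=d_2 w_1=0$ directly), so it determines a nonzero homomorphism $\alpha\colon\Delta(-1)\to\Delta(0)$ with $\alpha(v_{-1})=w_1$; this is injective because $\Delta(-1)$ is simple by part (2). Its image is $\mathrm{span}\{w_m:m\ge 1\}$, of codimension one, and the cokernel is the one-dimensional trivial module, which is therefore $L(0)$. Finally, at $\lambda=0$ one has $d_1 w_n=n(n-1)w_{n-1}\neq 0$ for $n\ge 2$, so $v_0$ and $w_1$ are the only maximal vectors; hence the image of $\alpha$ is the unique proper nonzero submodule, $\Delta(0)$ is uniserial, and the sequence \eqref{seq} is exact.
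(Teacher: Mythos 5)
Your proposal is correct and follows essentially the same route as the paper: both reduce simplicity to the existence of a singular (maximal) vector $w_n$ with $n>0$, compute $d_1 w_n = n(n-1-2\lambda)w_{n-1}$ and $d_2 w_n = n(n-1)(3\lambda-n+2)w_{n-2}$ (you via a commutator recursion, the paper by direct expansion), and conclude that the only solution is $(\lambda,n)=(0,1)$, which then yields the embedding $\Delta(-1)\hookrightarrow\Delta(0)$ and uniseriality exactly as in the paper. Your added justifications (that $d_1,d_2$ generate $\W^+$, and that $v_0,w_1$ are the only maximal vectors of $\Delta(0)$) only make explicit what the paper leaves implicit.
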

\begin{proof}
(1) The proof follows from $[z, d_{-1}]=0$, $\Delta(\lambda)=\C[d_{-1}]v_{\lambda}$ and $z v_{\lambda}=\lambda(\lambda+1)v_{\lambda}$.

(2) For any  $i\in \Z_{\geq 0}$, denote $v_{\lambda-i} :=d_{-1}^i\cdot v_\lambda$.
We can deduce that $d_0\cdot v_{\lambda-i}=(\lambda-i)v_{\lambda-i}$. Since $\Delta(\lambda)$ is generated by $v_\lambda$, $\Delta(\lambda)$ is reducible if and only if there is an $i\in \Z_{>0}$ such that
$d_1v_{\lambda-i}=d_2v_{\lambda-i}=0$.

We can compute
\begin{align*}
d_1\cdot v_{\lambda-i}& =d_1\cdot d_{-1}^i\cdot v_\lambda\\&=([d_1,d_{-1}^i]+d_{-1}^id_1)\cdot v_\lambda  \\
            &= \sum_{t=0}^{i-1}d_{-1}^t[d_1,d_{-1}]d_{-1}^{i-t-1}\cdot v_\lambda \\
            &=-2\sum_{t=0}^{i-1}d_{-1}^td_0d_{-1}^{i-t-1}\cdot v_\lambda\\
           &=-2\sum_{t=0}^{i-1}(\lambda-i+t+1) v_{\lambda-i+1}\\
           %&=-2\sum_{t=0}^{i-1}(\lambda-i+t+1)v_{i-1}\\
            &=i(i-1-2\lambda)v_{\lambda-i+1}.
\end{align*}
Similarly,
\begin{align*}
  d_2\cdot v_{\lambda-i}
  %& =d_2\cdot d_{-1}^i\cdot v_\lambda\\
%  &=([d_2,d_{-1}^i]+d_{-1}^id_2)\cdot v_\lambda  \\
%            &= \sum_{t=0}^{i-1}d_{-1}^t[d_2,d_{-1}]d_{-1}^{i-t-1}\cdot v_\lambda \\
%            &=-3\sum_{t=0}^{i-1}d_{-1}^td_1d_{-1}^{i-t-1}\cdot v_\lambda\\
%           %&=-3\sum_{t=0}^{i-1}d_{-1}^td_1\cdot v_{i-t-1}\\
%            &=-3\sum_{t=0}^{i-1}(i-t-1)(i-t-2-2\lambda) v_{\lambda-i+2}\\
%           % &=-3\sum_{t=0}^{i-1}((i-t-1)^2-(i-t-1)(1+2\lambda))v_{i-2}\\
%            &
            =i(i-1)(3\lambda-i+2)v_{\lambda-i+2}.
\end{align*}

Consider $d_1\cdot v_{\lambda-i}=0$ and $d_2\cdot v_{\lambda-i}=0, i\in \Z_{>0}$,  we have the equations
\begin{equation*}
\begin{cases}
i(i-1-2\lambda)=0 ,\\
i(i-1)(3\lambda-i+2)=0.
\end{cases}
\end{equation*}
The solution of this equation is $\lambda=0,i=1$. Moreover when $\lambda=0$, the submodule generated by $v_{-1}$ is a proper submodule which is isomorphic to $\Delta(-1)$.
Thus $\Delta(\lambda)$ is simple if and only if $\lambda\neq0.$

(3) By (2), the submodule $N$ generated by $v_{-1}$ of $\Delta(0)$  is simple and $\Delta(0)/N\cong L(0)$. So $N$ is  the unique nontrivial submodule of $\Delta(0)$. Hence $\Delta(0)$ is a uniserial module.
\end{proof}

\begin{remark}\label{basis} If we denote $e_{\lambda-i} :=\frac{(-1)^i}{i!}d_{-1}^i\cdot v_\lambda$, for any  $i\in \Z_{\geq 0}$, then $\{e_{\lambda-i} : i\in \Z_{\geq 0}\}$ is also a basis of $\Delta(\lambda)$ such that the action of $\W$ on
$\Delta(\lambda)$ is defined as follows:
\begin{equation}d_k e_{\lambda-i}=\big((k+1)\lambda+k-i\big)e_{\lambda-i+k}, \  \forall\ \ k\in\Z_{\geq -1}, \end{equation}
where $e_{\lambda-i+k}=0$  when $k-i>0$.
\end{remark}

\begin{corollary}\label{finite}
Any module $M$ in $\mathcal{O}$ has finite composition length.
\end{corollary}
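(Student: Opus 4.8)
The plan is to reduce the statement to the length bound on Verma modules via the filtration provided by Lemma \ref{filtration}. First I would invoke Lemma \ref{filtration} to obtain a finite filtration $0 = M_0 \subset M_1 \subset \cdots \subset M_m = M$ in which every quotient $M_j/M_{j-1}$ is a highest weight module. Since composition length is additive along a finite filtration whose factors are of finite length, it then suffices to show that each highest weight factor has finite composition length, for in that case the length of $M$ is bounded by the sum of the lengths of the $m$ factors.

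Next I would bound the length of an arbitrary highest weight module. Such a module is generated by a maximal vector of some weight $\mu$, hence is a quotient of the Verma module $\Delta(\mu)$, and a quotient cannot have composition length exceeding that of the module it comes from. Thus everything reduces to bounding the length of $\Delta(\mu)$. By Lemma \ref{Verma}(2), $\Delta(\mu) = L(\mu)$ is simple whenever $\mu \neq 0$, so its composition length is $1$; and by Lemma \ref{Verma}(3), $\Delta(0)$ is uniserial with the two composition factors $L(-1)$ and $L(0)$, using that $\Delta(-1) = L(-1)$ is simple since $-1 \neq 0$, so its composition length is $2$. Hence every Verma module, and therefore every highest weight module, has composition length at most $2$.

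Combining these two steps yields that $M$ has composition length at most $2m$, which is finite. There is no genuine obstacle here: the argument is a direct assembly of Lemma \ref{filtration} with the explicit structure of the Verma modules established in Lemma \ref{Verma}. The only point that deserves a word of care is the additivity of composition length along the filtration, which is legitimate precisely because each factor has already been shown to be of finite length before the lengths are summed.
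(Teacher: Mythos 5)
Your proof is correct and follows essentially the same route as the paper's: apply Lemma \ref{filtration} to reduce to highest weight modules, then use Lemma \ref{Verma} to bound the length of Verma modules (and hence of their quotients). You simply spell out the explicit bound of $2$ per factor and the additivity of length, which the paper leaves implicit.
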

\begin{proof} By lemma \ref{filtration}, $M$ has a filtration
$$0 = M_0 \subset  M_1\subset \cdots  \subset M_m = M,$$
 such that each factor $M_j/M_{j-1}$ for $1 \leq j\leq m$ is a highest weight module. By Lemma \ref{Verma}, every Verma module has
 finite composition length, so does any highest weight module. Consequently the  composition length of $M$ is
 finite.
\end{proof}

\begin{remark} In \cite{DSY}, the authors defined another category $\mathcal{O}'$ for $\W$. Any module $M$ in $\mathcal{O}'$ is locally finite
over $\C d_{-1}\oplus \C d_0$ rather than $\W^+$. Similar as  $\Delta(\lambda)$, define the $\W$-module: \begin{displaymath}
\Delta'( \lambda):=
U(\W)\bigotimes_{U(\C d_{-1}\oplus \C d_0)}\mathbb{C}'_{\lambda},
\end{displaymath} where $\mathbb{C}'_{\lambda}=\C v'_\lambda$ is the
 $U(\C d_{-1}\oplus \C d_0)$-module defined by $d_{-1}v'_\lambda=0, d_0v'_\lambda=\lambda v'_\lambda$. Since any simple weight
 module over $\W$ has one dimensional weight spaces, see \cite{M}, $\Delta'( \lambda)$ does not has  finite composition length. So $\mathcal{O}'$ does not satisfy Corollary \ref{finite}.
\end{remark}

\subsection{Extension between Verma modules}
Recall that for $\W$-modules $M, N\in \mathcal{O}$, the first cohomology space $\text{Ext}^1_{U(\W)}(M,N)$ classifies the short exact sequences:
$0 \rightarrow N \xrightarrow{\alpha} K \xrightarrow{\beta}  M\rightarrow 0$, also called the extension of $N$ by $M$. Generally $K$ may not lie in $\mathcal{O}$.
 We are only interested in  that $K \in \mathcal{O}$, i.e., $K$ needs to be a weight module. 
 So $\text{Ext}^1_{\mathcal{O}}(M,N)\subset \text{Ext}^1_{U(\W)}(M,N)$. Note that $\mathcal{O}$ is closed under weight module extensions. That is $M, N\in \mathcal{O}$ and
 $K$ is a weight module, then $K\in \mathcal{O}$. 
 In this subsection, we will give all extensions between Verma modules in $\mathcal{O}$.

\begin{lemma}\label{Ext}
Let $\lambda, \mu \in \C$.
\begin{enumerate}[$($1$)$]
\item If $\lambda-\mu\in \Z_{\geq 0}$ and $M$ is  a highest weight module with the highest weight $\mu$,  then $\emph{Ext}_{\mathcal{O}}^1(\Delta(\lambda),M)=0;$
\item $\emph{Ext}_{\mathcal{O}}^1(\Delta(\lambda), \Delta(\lambda))=0$.
\end{enumerate}
\end{lemma}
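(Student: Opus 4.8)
The plan is to prove both parts simultaneously by showing directly that any extension in $\mathcal{O}$ whose quotient is $\Delta(\lambda)$ must split, exploiting the universal property of the induced module $\Delta(\lambda)=\Ind_{\b}^{\W}\C_{\lambda}$. So I would start from an arbitrary short exact sequence $0\to M\xrightarrow{\iota}K\xrightarrow{\pi}\Delta(\lambda)\to 0$ in $\mathcal{O}$, where $K$ is a weight module. The first observation is that every map here commutes with $d_0$ and hence preserves weight spaces, so the sequence restricts to an exact sequence $0\to M_\nu\to K_\nu\to \Delta(\lambda)_\nu\to 0$ for each weight $\nu\in\C$. Since $\Delta(\lambda)_\lambda=\C v_\lambda$ is one dimensional, I may choose a preimage $w\in K_\lambda$ with $\pi(w)=v_\lambda$, and then $w$ is automatically a $d_0$-eigenvector of weight $\lambda$.

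The key step is to upgrade $w$ to a maximal vector, i.e. to show $\W^+w=0$. For each $k\geq 1$ we have $\pi(d_kw)=d_k v_\lambda=0$, so $d_kw\in\ker\pi=\iota(M)$ and sits in weight $\lambda+k$. Because $M$ is a highest weight module of highest weight $\mu$, we have $U(\W^-)=\C[d_{-1}]$ and hence $\mathrm{Supp}(M)\subseteq \mu-\Z_{\geq 0}$; combined with the hypothesis $\lambda-\mu\in\Z_{\geq 0}$ (so $\mu\leq\lambda$), no weight of $M$ can equal $\lambda+k$ with $k\geq 1$. Therefore $M_{\lambda+k}=0$, which forces $d_kw=0$ for all $k\geq 1$, so $w$ is maximal.

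Once $w$ is a maximal vector of weight $\lambda$, the assignment $v_\lambda\mapsto w$ is a $\b$-module homomorphism $\C_\lambda\to \mathrm{Res}_{\b}K$, and by Frobenius reciprocity (the universal property of $\Delta(\lambda)=\Ind_{\b}^{\W}\C_\lambda$) it extends uniquely to a $\W$-module map $s\colon\Delta(\lambda)\to K$ with $s(v_\lambda)=w$. Then $\pi\circ s$ is a $\W$-endomorphism of $\Delta(\lambda)$ fixing the generator $v_\lambda$, hence equals the identity; thus $s$ splits $\pi$ and the extension is trivial, giving $\Ext^1_{\mathcal{O}}(\Delta(\lambda),M)=0$. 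This proves (1). Part (2) is then the special case $M=\Delta(\lambda)$, $\mu=\lambda$, where $\lambda-\mu=0\in\Z_{\geq 0}$ and $\mathrm{Supp}(\Delta(\lambda))=\lambda-\Z_{\geq 0}$, so the argument applies without change.

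I expect the only delicate point to be the maximality argument in the second paragraph: everything rests on the asymmetry of the triangular structure of $\W$, namely that $\W^+$ raises weights while $\W^-=\C d_{-1}$ only lowers them, so that highest weight modules have support bounded above by $\mu\leq\lambda$ and the images $d_kw$ are pushed into weight spaces of $M$ that vanish. It is worth noting that this replaces the central character argument familiar from the semisimple setting, which cannot be used for (2) because $\Delta(\lambda)$ carries the single central character $\lambda(\lambda+1)$ by Lemma \ref{Verma}(1).
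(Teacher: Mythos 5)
Your proof is correct and follows essentially the same route as the paper's: lift the highest weight vector $v_\lambda$ to a weight vector $w$ in the middle term, show $\W^+w=0$ by a support/weight argument (the paper phrases this as $\lambda$ being maximal in $\mathrm{Supp}$ of the middle module, you phrase it as $d_kw$ landing in the vanishing weight space $M_{\lambda+k}$ of the submodule — the same observation), and then split the sequence via the universal property of $\Delta(\lambda)=\mathrm{Ind}_{\b}^{\W}\C_\lambda$, with part (2) obtained as the special case $\mu=\lambda$.
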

\begin{proof}

(1) Suppose that
\begin{equation}\label{ll}
   0 \rightarrow M \xrightarrow{\alpha} N \xrightarrow{\beta}  \Delta(\lambda) \rightarrow 0
\end{equation}
  is a short exact sequence in $\mathcal{O}$, where $N$ is a weight module. As $\lambda-\mu\in \Z_{\geq 0}$, $\mathrm{Supp}(N)=\mathrm{Supp}(M)\cup\mathrm{Supp}( \Delta(\lambda)),$ so $\lambda$ is a maximum weight in $N$. Recall that  $\Delta(\lambda)$ is generated by the highest weight vector $v_\lambda$.
Since $\beta$ is surjective, there exists weight vector $0\neq v\in N_\lambda$, such that $\beta(v)=v_\lambda$.
Moreover,  $v$ must be a maximal vector.  Otherwise, there exists $d_i\in \W^+, 0\neq d_i\cdot v\in N_{\lambda+i}$ such that $\lambda+i\in \mathrm{Supp}(N)$, which contradicts to the maximality of $\lambda$. The map such that $v_\lambda\mapsto v$ can be extended to a $U(\W)$-module homomorphism $\beta^\prime :\Delta(\lambda)\rightarrow N$, and $\beta\beta^\prime=1_{\Delta(\lambda)}$. So the exact sequence (\ref{ll}) is split and hence $\text{Ext}_{\mathcal{O}}^1(\Delta(\lambda),M)=0$.

(2) is an immediate corollary of (1).
\end{proof}

Let us recall the $\W$-modules $F_{\lambda}$ of Feigin and Fuchs defined in \cite{FF}, with $ \lambda\in\C$.   The module $F_{\lambda}$ has a basis $\{f_j\mid j\in \Z_{\geq 0}\}$ with the $\W$-action defined by  $$d_if_j=(j-(i+1)\lambda)f_{i+j},$$ where
$i\in \Z_{\geq -1}, j\in \Z_{\geq 0}$. These modules are shown to be restricted dualities of Verma modules. For a weight  $\W$-module $V=\oplus_{\lambda} V_{\lambda}$,
the restricted duality $\W$-module $V^*=\oplus_{\lambda} \text{Hom}_{\C}( V_{\lambda},\C)$ is defined by the natural action:
$$(d_i\phi)(v)=\phi(-d_i v), $$ for all $ i\in \Z_{\geq -1}, \phi\in V^*, v\in V$. By the universal property of $\Delta(\lambda)$,
one can  check  that $F_{\lambda}^*\cong \Delta(\lambda)$. Feigin and Fuchs gave the classification of the extensions of $F_\mu$ by the modules $F_\lambda$.

\begin{proposition}\cite{FF}\label{Ext7} Suppose that $\lambda, \mu\in \C$. Then
\begin{equation*}
\emph{Ext}^1_{U(\W)}(F_\lambda, F_\mu)=
\begin{cases}
 \C,\  \text{if}\  \lambda-\mu=0,2,3,4;\\
 \C\oplus \C,\  \text{if}\ (\lambda,\mu)=(0,-1);\\
 \C, \ \text{if}\  \ (\lambda,\mu)=(0,-5)\ \text{or}\ (4,-1);\\
 \C, \ \text{if}\  \ (\lambda,\mu)=(\frac{5\pm \sqrt{19}}{2},\frac{-7\pm \sqrt{19}}{2});\\
 0, \ \text{otherwise}.
\end{cases}
\end{equation*}
\end{proposition}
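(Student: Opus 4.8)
The plan is to translate the Ext computation into Lie algebra cohomology and then exploit the grading of $\W$ to reduce to a concrete, essentially finite, linear-algebra problem. First I would use the standard isomorphism
$$\text{Ext}^1_{U(\W)}(F_\lambda, F_\mu) \cong H^1\big(\W; \text{Hom}_\C(F_\lambda, F_\mu)\big),$$
where $\W$ acts on $\text{Hom}_\C(F_\lambda, F_\mu)$ by $(d\cdot\phi)(f) = d\,\phi(f) - \phi(d f)$. Under this identification an extension class is recorded by a $1$-cocycle $c:\W \to \text{Hom}_\C(F_\lambda, F_\mu)$ satisfying $c([x,y]) = x\cdot c(y) - y\cdot c(x)$, taken modulo coboundaries $c(x) = x\cdot\psi$ for $\psi \in \text{Hom}_\C(F_\lambda,F_\mu)$.

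Second, I would use the $\Z_{\geq -1}$-grading of $\W$ given by $\ad d_0$ together with the induced weight grading on the coefficient module: recall $f_j$ has $d_0$-weight $j-\lambda$ and $g_k$ has weight $k-\mu$, so a matrix unit $f_j \mapsto g_k$ has weight $(k-\mu)-(j-\lambda)$. Because $d_0$ itself lies in $\W$, the Cartan identity $L_{d_0} = \iota_{d_0}\,\delta + \delta\,\iota_{d_0}$ shows that $L_{d_0}$ acts by zero on cohomology, so only weight-zero cocycles contribute and $H^1 = H^1(\W;\text{Hom}_\C(F_\lambda,F_\mu))_0$. Solving $L_{d_0}c=0$ forces $c(d_i)$ to be a weight-$i$ element, hence $c(d_i)f_j = a^{(i)}_j\, g_{j+i+\mu-\lambda}$ for scalars $a^{(i)}_j$ (with $g_\bullet = 0$ for negative index); thus the whole cochain is encoded by the scalar sequences $a^{(i)}_\bullet$.

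Third, I would impose the cocycle condition using the relations $[d_i, d_j] = (j-i) d_{i+j}$ and the explicit actions on $F_\lambda$ and $F_\mu$, obtaining recursions among the $a^{(i)}_j$. Since $\W$ is generated by $d_{-1}, d_1, d_2$ (one has $d_0 = \tfrac12[d_{-1},d_1]$, $d_3=[d_1,d_2]$, and $d_{n+1}=\tfrac{1}{n-1}[d_1,d_n]$ for $n\geq 2$), a cocycle is determined by $c(d_{-1}), c(d_1), c(d_2)$, and the relations among these generators turn the infinite system into finitely many polynomial constraints in $\lambda,\mu$. Subtracting coboundaries, which act on the parameter space in a controlled way, normalizes away the redundant freedom and isolates the genuine extension classes.

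Finally, I would solve these polynomial constraints. Generically the only solution is trivial modulo coboundaries, giving $\text{Ext}^1 = 0$; the resonant differences $\lambda-\mu\in\{0,2,3,4\}$, where the index shift $j\mapsto j+i+\mu-\lambda$ makes a raising cocycle land on an allowed weight while satisfying the compatibility relations, produce the one-dimensional classes; and the remaining sporadic solutions $(0,-1)$, $(0,-5)$, $(4,-1)$ and the pair $(\tfrac{5\pm\sqrt{19}}{2}, \tfrac{-7\pm\sqrt{19}}{2})$ appear as the extra roots of the consistency equations. I expect the main obstacle to be exactly this last step: controlling the infinite recursion, correctly separating true cocycles from coboundaries, and pinning down the sporadic cases — the $\sqrt{19}$ pairs plainly arise as the roots of a quadratic discriminant in the compatibility relations, and checking that these and only these off-diagonal values support a nonzero class is the delicate point. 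For this reason the efficient route in practice is to invoke the Feigin–Fuchs cohomology computation of \cite{FF} directly, the cocycle analysis above being the conceptual skeleton underlying it.
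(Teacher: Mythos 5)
Your proposal is in substance the same as the paper's treatment: the paper states this proposition as a quoted result of Feigin--Fuchs \cite{FF} with no proof of its own, and your argument, after sketching the standard reduction to weight-zero cochains in Lie algebra cohomology, likewise defers the decisive computation (solving the recursions and extracting the sporadic cases) to \cite{FF}. The cocycle/grading skeleton you outline is an accurate description of the mechanism underlying the Feigin--Fuchs computation, but in both your write-up and the paper the actual proof content is the citation.
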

Moreover  all nontrivial extensions  of  $F_\mu$ by $F_{\lambda}$ were listed in the table 1 on page 207 of
\cite{FF}. We recall these extensions  in a slightly different form as follows.

(1) The unique non-split extension $E(F_\lambda, F_\lambda)$ of  $F_\lambda$ by itself has a basis $\{f_j, f_j'\mid j\in \Z_{\geq 0}\}$
such that \begin{equation}\label{e1}\aligned d_if_j& =(j-(i+1)\lambda)f_{i+j},\\
d_if'_j& =(j-(i+1)\lambda)f'_{i+j}+(i+1)f_{i+j}.
\endaligned \end{equation}

(2) There are two non-split extensions: $E(F_0, F_{-1})$, $E'(F_0, F_{-1})$ of  $F_{-1}$ by $F_{0}$.   The module $E(F_0, F_{-1})$ has a
a basis $\{f_j, f_j'\mid j\in \Z_{\geq 0}\}$
such that $$\aligned d_if_j& =(j+i+1)f_{i+j},\\
d_if'_j& =jf'_{i+j}+(i+1)jf_{i+j-1}.
\endaligned $$
The module $E'(F_0, F_{-1})$ has a
a basis $\{f_j, f_j'\mid j\in \Z_{\geq 0}\}$
such that $$\aligned d_if_j& =(j+i+1)f_{i+j},\\
d_if'_j& =jf'_{i+j}+(i+1)if_{i+j-1}.
\endaligned $$

(3) The unique non-split extension $E(F_\lambda, F_{\lambda-2})$ of  $F_{\lambda-2}$ by $F_{\lambda}$  has a basis $\{f_j, f_j'\mid j\in \Z_{\geq 0}\}$
such that $$\aligned d_if_j& =(j-(i+1)(\lambda-2))f_{i+j},\\
d_if'_j& =(j-(i+1)\lambda)f'_{i+j}+\big((i+1)i(i-1)+2(i+1)ij\big)f_{i+j-2}.
\endaligned $$

(4) The unique non-split extension $E(F_\lambda, F_{\lambda-3})$ of  $F_{\lambda-3}$ by $F_{\lambda}$  has a basis $\{f_j, f_j'\mid j\in \Z_{\geq 0}\}$
such that $$\aligned d_if_j& =(j-(i+1)(\lambda-3))f_{i+j},\\
d_if'_j& =(j-(i+1)\lambda)f'_{i+j}+\big((i+1)i(i-1)j+(i+1)ij(j-1)\big)f_{i+j-3}.
\endaligned $$

(5) The unique non-split extension $E(F_\lambda, F_{\lambda-4})$ of  $F_{\lambda-4}$ by $F_{\lambda}$  has a basis $\{f_j, f_j'\mid j\in \Z_{\geq 0}\}$
such that $$\aligned d_if_j& =(j-(i+1)(\lambda-4))f_{i+j},\\
d_if'_j& =(j-(i+1)\lambda)f'_{i+j}+\big(\frac{(i+1)!}{(i-4)!}\lambda+\frac{(i+1)!j}{(i-3)!}\\
& \quad  -6(i+1)i(i-1)j(j-1)-4(i+1)ij(j-1)(j-2) \big)f_{i+j-4}.
\endaligned $$

(6) The unique non-split extension $E(F_0, F_{-5})$ of  $F_{-5}$ by $F_{0}$  has a basis $\{f_j, f_j'\mid j\in \Z_{\geq 0}\}$
such that $$\aligned d_if_j& =(j+5(i+1))f_{i+j},\\
d_if'_j& =jf'_{i+j}+\Big(2\frac{(i+1)!j}{(i-4)!}-5\frac{(i+1)!j(j-1)}{(i-3)!}\\
& \quad  +10(i+1)i(i-1)j(j-1)(j-2)+5(i+1)ij(j-1)(j-2)(j-3) \Big)f_{i+j-5}.
\endaligned $$

(7) The unique non-split extension $E(F_4, F_{-1})$ of  $F_{-1}$ by $F_{4}$  has a basis $\{f_j, f_j'\mid j\in \Z_{\geq 0}\}$
such that $$\aligned d_if_j& =(j+i+1)f_{i+j},\\
d_if'_j& =(j-4(i+1))f'_{i+j}+\Big(12\frac{(i+1)!}{(i-5)!}+22\frac{(i+1)!j}{(i-4)!}
+5\frac{(i+1)!j(j-1)}{(i-3)!}\\
& \quad  -10(i+1)i(i-1)j(j-1)(j-2)-5(i+1)ij(j-1)(j-2)(j-3) \Big)f_{i+j-5}.
\endaligned $$

(8) When $ (\lambda,\mu)=(\frac{5\pm \sqrt{19}}{2},\frac{-7\pm \sqrt{19}}{2})$, the unique non-split extension $E(F_\lambda, F_\mu)$ of  $F_{\mu}$ by $F_{\lambda}$  has a basis $\{f_j, f_j'\mid j\in \Z_{\geq 0}\}$
such that $$\aligned d_if_j& =(j-(i+1)\mu)f_{i+j},\\
d_if'_j& =(j-(i+1)\lambda)f'_{i+j}+\Big(\frac{(i+1)!(22\pm 5\sqrt{19})}{(i-6)!4}-\frac{(i+1)!j(31\pm 7\sqrt{19})}{(i-5)!2}\\
&\quad- \frac{(i+1)!j(j-1)(25\pm 7\sqrt{19})}{(i-4)!2}-\frac{(i+1)!j(j-1)(j-2)5}{(i-3)!}\\
& \quad  +5 (i+1)i(i-1)j(j-1)(j-2)(j-3)+  \frac{j!2(i+1)i}{(j-5)!}\Big)f_{i+j-6}.
\endaligned $$

In the above formulas, $f_j=f'_j=0$ if $j<0$. By the isomorphism $F_{\lambda}^*\cong \Delta(\lambda)$.
we obtain all nontrivial extensions  between Verma modules.

\begin{proposition}\label{Ext7} Suppose that $\lambda, \mu\in \C$. Then
\begin{equation*}
\emph{Ext}^1_{\mathcal{O}}(\Delta(\mu),\Delta(\lambda))=
\begin{cases}
 \C,\  \text{if}\  \lambda-\mu=2,3,4;\\
 \C, \ \text{if}\  \ (\lambda,\mu)=(0, -1), (0,-5)\ \text{or}\ (4,-1);\\
 \C, \ \text{if}\  \ (\lambda,\mu)=(\frac{5\pm \sqrt{19}}{2},\frac{-7\pm \sqrt{19}}{2});\\
 0, \ \text{otherwise}.
\end{cases}
\end{equation*}
\end{proposition}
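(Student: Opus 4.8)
The plan is to transport the Feigin--Fuchs classification of extensions among the modules $F_\lambda$ to the Verma modules through the restricted duality $V \mapsto V^*$, and then to carve out the subspace of those classes that actually live in $\mathcal{O}$, i.e.\ that are represented by a weight module.

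First I would make the duality precise. On the category of weight $\W$-modules with finite-dimensional weight spaces the restricted dual $D = (\,\cdot\,)^*$ is an exact contravariant functor with $V^{**}\cong V$, and we already know $F_\lambda^*\cong \Delta(\lambda)$, whence dually $\Delta(\lambda)^*\cong F_\lambda$. Applying $D$ to a short exact sequence $0\to \Delta(\lambda)\to K\to \Delta(\mu)\to 0$ representing a class in $\mathrm{Ext}^1_{\mathcal{O}}(\Delta(\mu),\Delta(\lambda))$ yields $0\to F_\mu\to K^*\to F_\lambda\to 0$, a weight-module extension of $F_\mu$ by $F_\lambda$; conversely $D$ sends a weight extension of $F_\mu$ by $F_\lambda$ back to an extension of $\Delta(\mu)$ by $\Delta(\lambda)$ whose middle term is again a weight module built from objects of $\mathcal{O}$, hence lies in $\mathcal{O}$ since $\mathcal{O}$ is closed under weight extensions. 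Because a splitting of a weight extension can be replaced by its degree-zero component (the maps $A\to K\to B$ are degree $0$, so the weight-preserving part of a $U(\W)$-splitting is still a splitting), a weight extension that splits over $U(\W)$ splits in the graded sense. Thus $D$ identifies $\mathrm{Ext}^1_{\mathcal{O}}(\Delta(\mu),\Delta(\lambda))$ with the subspace of $\mathrm{Ext}^1_{U(\W)}(F_\lambda,F_\mu)$ consisting of classes representable by a weight module.

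The computation then runs case by case over the list recalled above. For a nonsplit extension with basis $\{f_j,f_j'\}$, the middle term is a weight module precisely when $d_0$ acts diagonally, equivalently when the correction term $c(d_i)(f_j')$ attaching $f_j'$ to the submodule spanned by the $f$'s vanishes at $i=0$. I would check this directly from the formulas (1)--(8): whenever $\lambda-\mu\in\{2,3,4\}$, or $(\lambda,\mu)\in\{(0,-5),(4,-1)\}$, or $(\lambda,\mu)=(\tfrac{5\pm\sqrt{19}}{2},\tfrac{-7\pm\sqrt{19}}{2})$, every summand of the correction carries a factor $(i+1)i(\cdots)$ or a falling factorial $\tfrac{(i+1)!}{(i-n)!}$ with $n\ge 1$ that vanishes at $i=0$, so the extension is a weight module and contributes exactly $\C$ to $\mathrm{Ext}^1_{\mathcal{O}}$. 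The answer drops only in two degenerate cases. When $\lambda=\mu$ the unique nonsplit extension $E(F_\lambda,F_\lambda)$ has correction $(i+1)f_{i+j}$, which is $f_j\neq 0$ at $i=0$, so $d_0$ acquires a Jordan block and the extension is not a weight module; equivalently Lemma \ref{Ext}(2) gives $\mathrm{Ext}^1_{\mathcal{O}}(\Delta(\lambda),\Delta(\lambda))=0$ outright. When $(\lambda,\mu)=(0,-1)$, where $\mathrm{Ext}^1_{U(\W)}(F_0,F_{-1})=\C\oplus\C$ is spanned by $E(F_0,F_{-1})$ and $E'(F_0,F_{-1})$, a general class $aE+bE'$ has $d_0 f_j'=j f_j'+aj\,f_{j-1}$, which is diagonalizable iff $a=0$; hence the weight subspace is one dimensional, spanned by $E'(F_0,F_{-1})$, and $\mathrm{Ext}^1_{\mathcal{O}}(\Delta(-1),\Delta(0))=\C$. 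For all remaining pairs $\mathrm{Ext}^1_{U(\W)}(F_\lambda,F_\mu)=0$, so $\mathrm{Ext}^1_{\mathcal{O}}(\Delta(\mu),\Delta(\lambda))=0$, and the four clauses of the statement follow.

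The main obstacle is the second degenerate case: showing that the two-dimensional $U(\W)$-extension space for the pair $(F_0,F_{-1})$ collapses to a single dimension inside $\mathcal{O}$. This is exactly the point where $\mathcal{O}$ for $\W$ departs from the classical semisimple picture, and it requires both the precise bookkeeping of the duality --- so that ``lands in $\mathcal{O}$'' is correctly identified with ``is a weight module'' --- and the explicit evaluation of the Feigin--Fuchs cocycles at $d_0$. Everything else reduces to the routine, if lengthy, inspection of the falling-factorial coefficients in (1)--(8) confirming that they vanish at $i=0$.
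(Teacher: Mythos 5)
Your proposal is correct and takes essentially the same route as the paper: transport the Feigin--Fuchs classification through the restricted duality $F_\lambda^*\cong\Delta(\lambda)$, then decide for each of the listed extensions whether the middle term is a weight module by evaluating the correction cocycle at $d_0$ (i.e.\ at $i=0$), which kills exactly $E(F_\lambda,F_\lambda)$ and the $E$-component in the pair $(0,-1)$. Your write-up is somewhat more careful than the paper's on two points the paper leaves implicit --- the precise identification of $\mathrm{Ext}^1_{\mathcal{O}}(\Delta(\mu),\Delta(\lambda))$ with the weight-representable subspace of $\mathrm{Ext}^1_{U(\W)}(F_\lambda,F_\mu)$ (including the degree-zero splitting argument), and the analysis of a general combination $aE+bE'$ rather than just the two basis classes --- but the underlying argument is the same.
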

\begin{proof}By $\dim \text{Ext}_{\mathcal{O}}^1(\Delta(\mu),\Delta(\lambda))\leq \dim \text{Ext}_{U(\W)}^1(\Delta(\mu),\Delta(\lambda))$ and $F_{\lambda}^*\cong \Delta(\lambda)$, we have
\begin{equation*}
\dim \text{Ext}^1_{\mathcal{O}}(\Delta(\mu),\Delta(\lambda))\leq
\begin{cases}
 1,\  \text{if}\  \lambda-\mu=0, 2,3,4;\\
 2,\  \text{if}\ (\lambda,\mu)=(0,-1);\\
 1, \ \text{if}\  \ (\lambda,\mu)=(0,-5)\ \text{or}\ (4,-1);\\
 1,\  \text{if}\  \ (\lambda,\mu)=(\frac{5\pm \sqrt{19}}{2},\frac{-7\pm \sqrt{19}}{2}).
\end{cases}
\end{equation*}

If $E(F_\lambda, F_\lambda)$ is a weight module, then there are nonzero $a_j\in \C$
such that $d_0(f'_j+a_jf_j)=(j-\lambda)(f'_j+a_jf_j) $ for almost all $j$. However on the other side,
$d_0(f'_j+a_jf_j)= (j-\lambda)f'_j+f_j+a_j(j-\lambda)f_j$ by (\ref{e1}), which is a contradiction.
So $E(F_\lambda, F_\lambda)$ is not a weight module, and hence $\text{Ext}_{\mathcal{O}}^1(\Delta(\lambda), \Delta(\lambda))=0$.
In fact, by (2) in Lemma \ref{Ext}, we can also see that $\text{Ext}_{\mathcal{O}}^1(\Delta(\lambda), \Delta(\lambda))=0$.

 Similarly $E(F_0, F_{-1})$ is not  a weight module. By the action of $d_0$ on $f'_j$, we can see that $E'(F_0, F_{-1})$,  $E(F_\lambda, F_{\lambda-2})$, $E(F_\lambda, F_{\lambda-3})$, $E(F_\lambda, F_{\lambda-4})$, $E(F_0, F_{-5})$, $E(F_4, F_{-1})$,
 and $E(F_{\frac{5\pm \sqrt{19}}{2}},F_{\frac{-7\pm \sqrt{19}}{2}})$ are weight modules. So these modules are also no-split extensions between Verma modules in $\mathcal{O}$. Then we can complete the proof.
\end{proof}

\subsection{Extensions between simple modules}

In this subsection, we compute
 $\text{Ext}^1_{\mathcal{O}}(M,N)$  for all simple  modules $M, N\in \mathcal{O}$.

\begin{lemma}If $\lambda-\mu\not\in \Z$, then $\emph{Ext}_{\mathcal{O}}^1(L(\lambda),L(\mu))=0$.
\end{lemma}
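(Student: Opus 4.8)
The plan is to exploit the $\Z$-grading of $\W$ by the eigenvalues of $\mathrm{ad}\,d_0$, which forces any weight module to split as a direct sum indexed by the cosets of $\Z$ in $\C$ occurring in its support. Concretely, I would start from an arbitrary short exact sequence in $\mathcal{O}$
\begin{equation*}
0 \rightarrow L(\mu) \xrightarrow{\alpha} K \xrightarrow{\beta} L(\lambda) \rightarrow 0,
\end{equation*}
where $K$ is necessarily a weight module. Passing to $\nu$-weight spaces is exact, so $\mathrm{Supp}(K)=\mathrm{Supp}(L(\mu))\cup\mathrm{Supp}(L(\lambda))$. Since $\mathrm{Supp}(L(\mu))\subseteq \mu+\Z$ and $\mathrm{Supp}(L(\lambda))\subseteq \lambda+\Z$, the hypothesis $\lambda-\mu\notin\Z$ makes these two cosets disjoint, so $\mathrm{Supp}(K)$ is partitioned between $\mu+\Z$ and $\lambda+\Z$.

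Next I would set $K_1=\bigoplus_{\nu\in\mu+\Z}K_\nu$ and $K_2=\bigoplus_{\nu\in\lambda+\Z}K_\nu$, so that $K=K_1\oplus K_2$ as vector spaces. Each basis vector $d_i$ has $d_0$-weight $i\in\Z$, hence $d_iK_\nu\subseteq K_{\nu+i}$ stays inside the same coset; therefore $K_1$ and $K_2$ are weight $\W$-submodules of $K$. Because $\beta$ commutes with $d_0$ it preserves weight spaces, and since $\mathrm{Supp}(L(\lambda))\subseteq\lambda+\Z$ we get $\beta(K_1)=0$, so $K_1\subseteq\ker\beta=\alpha(L(\mu))$; the reverse inclusion holds as $\alpha(L(\mu))$ has support in $\mu+\Z$, whence $K_1=\alpha(L(\mu))$. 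Consequently $\beta$ restricts to an isomorphism $K_2\xrightarrow{\ \sim\ }L(\lambda)$, whose inverse is a section of $\beta$ in $\mathcal{O}$, so the sequence splits and $\mathrm{Ext}^1_{\mathcal{O}}(L(\lambda),L(\mu))=0$.

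The argument is essentially formal once the support computation is recorded, so I do not anticipate a serious obstacle; the only points requiring care are the exactness of the weight-space functor (to justify the description of $\mathrm{Supp}(K)$) and the check that $K_1,K_2$ are genuine submodules, which rests solely on the $\Z$-grading of $\W$. A virtue of this approach is that it never uses the precise structure of the $L(\nu)$ (for instance that $L(0)\neq\Delta(0)$), only that each has support in a single coset, so all values of $\lambda,\mu$ with $\lambda-\mu\notin\Z$ are handled uniformly. I would also remark that the tempting shortcut through the central element $z$, which acts on $L(\nu)$ by the scalar $\nu(\nu+1)$ by Lemma \ref{Verma}(1), does \emph{not} by itself finish the proof: the scalar $\nu(\nu+1)$ fails to separate $\lambda$ from $\mu$ exactly when $\mu=-\lambda-1$, and this can occur with $\lambda-\mu=-2\mu-1\notin\Z$. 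The coset decomposition above sidesteps this defect entirely.
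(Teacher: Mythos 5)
Your proposal is correct and is essentially the paper's own argument: the paper also observes that $\mathrm{Supp}(K)=\mathrm{Supp}(L(\mu))\cup\mathrm{Supp}(L(\lambda))$ lies in two disjoint cosets of $\Z$, and concludes that $K\cong L(\mu)\oplus L(\lambda)$. Your write-up merely makes explicit the step the paper leaves implicit, namely that the two coset pieces $K_1,K_2$ are $\W$-submodules because each $d_i$ shifts weights by an integer.
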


 \begin{proof}If $M\in \mathcal{O}$ such that $L(\mu)\subset M$ and $M/L(\mu)\cong L(\lambda)$, then $\text{Supp}(M)=\text{Supp}(L(\mu))\cup \text{Supp}(L(\lambda))$. Since
 $\lambda-\mu\not\in \Z$, $\text{Supp}(L(\mu))\cap \text{Supp}(L(\lambda))=\emptyset$. So $M\cong L(\mu)\oplus L(\lambda)$.
\end{proof}

\begin{lemma}\label{Ext6}
\begin{enumerate}[$($1$)$]
\item For all $\lambda\in\C$, $\dim \emph{Ext}_{\mathcal{O}}^1(L(\lambda),L(\lambda))=0$.
\item
We have $ \dim\emph{Ext}_{\mathcal{O}}^1(L(0),L(-1)) =1$. That is,  if
\begin{equation}0 \rightarrow \Delta(-1) \rightarrow M \rightarrow  L(0) \rightarrow 0,\end{equation} is a non-split exact sequence of $\W$-modules in $\mathcal{O}$, then $M\cong \Delta(0)$.
\item $ \dim\emph{Ext}_{\mathcal{O}}^1(L(0),L(\lambda)) =0$ for all $\lambda\in\C\backslash \{ -1\}$.
\end{enumerate}
\end{lemma}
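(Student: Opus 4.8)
The plan is to handle the three parts separately, using the explicit $\W$-action on Verma modules from Remark \ref{basis} together with Lemmas \ref{Ext} and \ref{Verma}; note that by Corollary \ref{finite} the category $\mathcal{O}$ is abelian of finite length, so Yoneda six-term exact sequences are available.

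\emph{Part (1).} For $\lambda\neq 0$ we have $L(\lambda)=\Delta(\lambda)$, so the assertion is exactly Lemma \ref{Ext}(2). For $\lambda=0$, recall $L(0)$ is the trivial module concentrated in weight $0$. In any extension $0\to L(0)\to M\to L(0)\to 0$ in $\mathcal{O}$ one has $\mathrm{Supp}(M)=\{0\}$, hence $M=M_0$ and $d_kM_0\subseteq M_k=0$ for every $k\geq -1$; so $\W$ acts trivially on $M$, giving $M\cong L(0)\oplus L(0)$ and splitting the sequence.

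\emph{Part (2).} First I would establish existence: the sequence (\ref{seq}) is a non-split extension of $L(0)$ by $\Delta(-1)=L(-1)$ since $\Delta(0)$ is indecomposable (Lemma \ref{Verma}(3)), so $\dim\Ext^1_{\mathcal{O}}(L(0),L(-1))\geq 1$. To show it equals $1$, I would apply the contravariant functor $\mathrm{Hom}_{\mathcal{O}}(-,\Delta(-1))$ to (\ref{seq}). Here $\mathrm{Hom}_{\mathcal{O}}(L(0),\Delta(-1))=\mathrm{Hom}_{\mathcal{O}}(\Delta(0),\Delta(-1))=0$ because $\Delta(-1)$ has no nonzero weight-$0$ vector, while $\mathrm{Hom}_{\mathcal{O}}(\Delta(-1),\Delta(-1))=\C$ by Schur's lemma and $\Ext^1_{\mathcal{O}}(\Delta(0),\Delta(-1))=0$ by Lemma \ref{Ext}(1) (with $\lambda=0$, $\mu=-1$, $\lambda-\mu=1\in\Z_{\geq 0}$). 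The six-term sequence then collapses to $0\to \C\to \Ext^1_{\mathcal{O}}(L(0),\Delta(-1))\to 0$, giving dimension $1$. For the module statement, take any non-split extension $0\to \Delta(-1)\to M\to L(0)\to 0$: then $\mathrm{Supp}(M)=\Z_{\leq 0}$ with all weight spaces one-dimensional, and lifting a generator of $L(0)$ to $v\in M_0$ makes $v$ a maximal vector (since $0$ is the top weight), so the universal property yields $\phi\colon\Delta(0)\to M$ with $\phi(v_0)=v$. The kernel of $\phi$ is a submodule of $\Delta(0)$, hence $0$, $\Delta(-1)$, or $\Delta(0)$; it is not $\Delta(0)$ as $\phi\neq 0$, and it is not $\Delta(-1)$ (that would make $\C v$ a trivial complement, splitting the sequence), so $\ker\phi=0$ and the equality of weight-space dimensions forces $\phi$ to be an isomorphism $M\cong\Delta(0)$.

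\emph{Part (3).} By the preceding lemma I may assume $\lambda\in\Z$, and $\lambda=0$ is covered by Part (1); so fix $\lambda\in\Z$ with $\lambda\neq 0,-1$ and an extension $0\to\Delta(\lambda)\xrightarrow{\alpha}M\xrightarrow{\beta}L(0)\to 0$ in $\mathcal{O}$, identify $\Delta(\lambda)$ with its image, and lift the generator of $L(0)$ to $\tilde w\in M_0$. Since $L(0)$ is trivial, $d_k\tilde w\in\ker\beta=\Delta(\lambda)$ lies in $\Delta(\lambda)_k$ for every $k\geq -1$. If $\lambda\leq -2$ then $\Delta(\lambda)_k=0$ for all $k\geq -1$ (the support lies in $\Z_{\leq -2}$), so $\W\tilde w=0$ and the sequence splits immediately. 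The remaining, and main, case is $\lambda\geq 1$. Writing $d_{-1}\tilde w=a_{-1}e_{-1}$, $d_1\tilde w=a_1e_1$, $d_2\tilde w=a_2e_2$ in the basis of Remark \ref{basis} (with $a_2=0$ when $\lambda=1$, as $\Delta(1)_2=0$), I would apply $[d_1,d_{-1}]=-2d_0$ and $[d_2,d_{-1}]=-3d_1$ to $\tilde w$ and use $d_ke_{\lambda-i}=\big((k+1)\lambda+k-i\big)e_{\lambda-i+k}$ to deduce $a_{-1}=-a_1$ and (for $\lambda\geq 2$) $a_2=2a_1$. As $\lambda+1\neq 0$, setting $u=\tfrac{a_1}{\lambda+1}e_0\in\Delta(\lambda)_0$ gives $d_ku=d_k\tilde w$ for $k=-1,1,2$; since $\{d_{-1},d_1,d_2\}$ generate $\W$, the vector $w=\tilde w-u$ satisfies $\W w=0$ while $\beta(w)\neq 0$, so $\C w$ splits the sequence. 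Hence $\Ext^1_{\mathcal{O}}(L(0),L(\lambda))=0$ for all $\lambda\neq -1$.

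The main obstacle is the $\lambda\geq 1$ case of Part (3): one must verify that the Lie-algebra relations among $d_{-1},d_1,d_2$ force precisely the linear relations $a_{-1}=-a_1$ and $a_2=2a_1$ which express the cocycle $d_k\mapsto d_k\tilde w$ as the coboundary $d_k\mapsto d_ku$, and one must track the degenerate weight spaces (e.g. $\Delta(1)_2=0$) so that the generator $d_2$ is treated correctly. Everything else reduces to support and weight-space bookkeeping together with the already-established facts about Verma modules.
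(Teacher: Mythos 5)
Your proposal is correct, but it takes a genuinely different route from the paper in parts (1) and (3), so a comparison is worthwhile. In part (2) you and the paper do essentially the same thing: apply $\mathrm{Hom}_{\mathcal{O}}(-,L(-1))$ to the sequence $0\to\Delta(-1)\to\Delta(0)\to L(0)\to 0$ and use Lemma \ref{Ext}(1) to kill $\Ext^1_{\mathcal{O}}(\Delta(0),L(-1))$; your additional argument identifying any non-split middle term with $\Delta(0)$ (lift of a maximal vector, universal property, uniseriality of $\Delta(0)$) fills in a claim the paper states but does not prove beyond the dimension count. In part (1) for $\lambda=0$, the paper again argues homologically, applying $\mathrm{Hom}_{\mathcal{O}}(-,L(0))$ to the same sequence, whereas you observe directly that a self-extension of the trivial module has support $\{0\}$ and hence trivial $\W$-action; this is more elementary and equally valid (only note that the line ``$d_kM_0\subseteq M_k=0$ for every $k\geq -1$'' should exclude $k=0$, where instead $d_0$ acts as zero on $M_0$ by the definition of the weight space). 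The real divergence is part (3): the paper's key tool is the $\mathfrak{sl}_2$-Casimir $z=-d_1d_{-1}+d_0^2-d_0$ together with Lemma \ref{Verma}(1) --- it corrects the lift $e_0'$ so that $d_1e_0'=0$, deduces $ze_0'=0$ and hence $d_{-1}e_0'=0$ because $z$ acts by $\lambda(\lambda+1)\neq 0$ on $\Delta(\lambda)$, and then derives a contradiction from $[d_{-1},d_2]e_0'=0$ and the injectivity of $d_{-1}$ on $\Delta(\lambda)$. You instead compute the cochain $d_k\mapsto d_k\tilde w$ on the generators $d_{-1},d_1,d_2$ in the basis of Remark \ref{basis} and exhibit it as a coboundary, producing an explicit splitting vector $w=\tilde w-u$; your relations $a_{-1}=-a_1$ and $a_2=2a_1$, and the choice $u=\tfrac{a_1}{\lambda+1}e_0$, do check out against the formula $d_ke_{\lambda-i}=\big((k+1)\lambda+k-i\big)e_{\lambda-i+k}$. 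The Casimir argument is shorter and avoids structure constants; yours is self-contained, makes the splitting explicit, and treats the degenerate weight spaces (such as $\Delta(1)_2=0$) transparently. Both are complete proofs.
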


\begin{proof}
(1) When $\lambda\neq0$, thanks to (1) in Lemma \ref{Ext} and $L(\lambda)=\Delta(\lambda)$,  we obtain  $\text{Ext}^1(L(\lambda),L(\lambda))=0$.    It is enough to prove that $\text{Ext}_{\mathcal{O}}^1(L(0),L(0))=0$.

Consider the short exact sequence $$0 \rightarrow \Delta(-1) \rightarrow \Delta(0) \rightarrow  L(0) \rightarrow 0,$$
where $\Delta(-1)$ is the unique nonzero submodule of $\Delta(0)$.
We can get a long exact sequence by using the functor $\text{Hom}_{\mathcal{O}}(-,L(0)):$ $$\cdots \rightarrow \text{Hom}_{\mathcal{O}}(\Delta(-1),L(0)) \rightarrow \text{Ext}_{\mathcal{O}}^1(L(0),L(0)) \rightarrow \text{Ext}_{\mathcal{O}}^1(\Delta(0),L(0))\rightarrow \cdots.$$
According to $(1)$ in Lemma \ref{Ext}, and the fact that $L(0)$ is not a composition factor of $\Delta(-1)$, we have $\text{Ext}^1_{\mathcal{O}}(\Delta(0),L(0))=0, \text{Hom}_{\mathcal{O}}(\Delta(-1),L(0))=0$, whence $\text{Ext}^1_{\mathcal{O}}(L(0),L(0))=0$.
Thus $\text{Ext}_{\mathcal{O}}^1(L(\lambda),L(\lambda))=0$ for any $\lambda\in\C$.

(2)
 Consider the short exact sequence
\begin{equation}\label{seq3.4}0 \rightarrow \Delta(-1) \rightarrow \Delta(0) \rightarrow  L(0) \rightarrow 0,\end{equation} where $\Delta(-1)$ is the maximal submodule of codimension $1$ of $\Delta(0)$.
According to Lemma \ref{Ext}, we have $\text{Ext}^1_{\mathcal{O}}(\Delta(0),L(-1))=0$. Applying
 $\text{Hom}_{\mathcal{O}}(-,L(-1)) $ to (\ref{seq3.4}), from $ \text{Hom}_{\mathcal{O}}(\Delta(0),L(-1))=0$,  we can get $$0 \rightarrow \text{Hom}_{\mathcal{O}}(\Delta(-1),L(-1)) \rightarrow \text{Ext}_{\mathcal{O}}^1(L(0),L(-1)) \rightarrow \text{Ext}_{\mathcal{O}}^1(\Delta(0),L(-1))\rightarrow\cdots.$$ Thus $\text{Ext}_{\mathcal{O}}^1(L(0),L(-1))\cong \text{Hom}_{\mathcal{O}}(\Delta(-1),L(-1))\cong \C$.

 (3) By (1) and (2), we can assume that $\lambda\neq 0, -1$.  Let  $M$ be a non-split extension of $L(\lambda)$ by $L(0)$ in $\mathcal{O}$. We can suppose that $L(\lambda)\subset M$ and $M/L(\lambda)= L(0)$.  Then $1\leq \dim M_0\leq 2$. There must exists $e_0'\in M_0\setminus L(\lambda)_0$ such that $d_1e'_0=0$. Thus $d_1d_{-1}e'_0=0$.
 As $z d_{-1}e'_0=\lambda(\lambda+1)d_{-1}e'_0=d_{-1}ze'_0=0$ and $\lambda\neq 0, -1$, we deduce that
 $d_{-1}e'_0=0$. Since $M$ is indecomposable, $d_2e'_0$ is a nonzero element in $L(\lambda)$. So
 $[d_{-1}, d_2]e'_0=0$ implies that $d_{-1}d_2e'_0=0$, contradicting with that $d_{-1}$ acts injectively on $L(\lambda)$.

\end{proof}

Next we use the relative Lie algebra cohomology to compute  $\dim\text{Ext}_{\mathcal{O}}^1(L(\lambda),L(0))$. For two weight $\W$-modules $M, N$,
$$\aligned \text{Ext}^1_{\W, \mh}(M,N)&\cong H^1(\W, \mh; \text{Hom}_{\C}(M,N))\\
 & \cong C^1(\W, \mh; \text{Hom}_{\C}(M,N))/B^1(\W, \mh; \text{Hom}_{\C}(M,N)),\endaligned$$ where the set of $1$-cocycles $C^1(\W, \mh; \text{Hom}_{\C}(M,N))$ is the the subspace of all $c\in \text{Hom}_{\mh}(\W, \mh; \text{Hom}_{\C}(M,N))$ such that
\begin{equation} c(\mh)=0,\ \  \ \ c([g_1,g_2])=[g_1, c(g_2)]-[g_2, c(g_1)],
\end{equation}
for all $g_1, g_2\in \W$, where  $[g, \psi]\in \text{Hom}_{\C}(M,N)$ such that $$[g, \psi](v)=g\psi(v)-\psi(gv),$$ for $g\in \W, \psi \in\text{Hom}_{\C}(M,N), v\in M $.
A $1$-cocycle $c$ is a coboundary if there is a $\psi\in \text{Hom}_{\mh}(M,N)$ such that $c(g)=[g, \psi]$ for any $g\in \W$.

\begin{lemma}\label{ext-1-0}
 $\dim\emph{Ext}_{\mathcal{O}}^1(L(\lambda),L(0)) =
\begin{cases}
 1,\  \text{if}\  \lambda=-1,-2;\\
 0,\  \text{if}\   \lambda\neq-1,-2.
\end{cases}$
\end{lemma}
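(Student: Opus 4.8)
The plan is to split the problem according to the coset $\lambda+\Z$ and the sign of $\lambda$: three of the four resulting cases fall immediately to results already in hand, and only $\lambda\in\Z_{<0}$ requires genuine work, which I would carry out as an explicit $1$-cocycle computation in the relative complex $C^{\bullet}(\W,\mh;\mathrm{Hom}_\C(L(\lambda),L(0)))$ set up just above.

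First I would dispose of the cases $\lambda\notin\Z$, $\lambda\in\Z_{>0}$, and $\lambda=0$. If $\lambda\notin\Z$ then $\mathrm{Supp}(L(\lambda))\subset\lambda+\Z$ and $\mathrm{Supp}(L(0))=\{0\}$ lie in different cosets of $\Z$, so the preceding lemma gives $\mathrm{Ext}^1_{\mathcal O}(L(\lambda),L(0))=0$. If $\lambda\in\Z_{>0}$ then $L(\lambda)=\Delta(\lambda)$ by Lemma \ref{Verma}(2), and since $L(0)$ is a highest weight module of highest weight $0$ with $\lambda-0\in\Z_{\geq 0}$, part (1) of Lemma \ref{Ext} yields $\mathrm{Ext}^1_{\mathcal O}(\Delta(\lambda),L(0))=0$. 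The case $\lambda=0$ is exactly Lemma \ref{Ext6}(1). Thus only $\lambda=-n$ with $n\in\Z_{>0}$ remains.

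For $\lambda=-n$ I would use the identification $\mathrm{Ext}^1_{\mathcal O}(L(-n),L(0))\cong H^1(\W,\mh;\mathrm{Hom}_\C(L(-n),L(0)))$. As $L(0)$ is the trivial one-dimensional module, the coefficient module is the weight-graded dual of $L(-n)$ with the contragredient action, and since $0\notin\mathrm{Supp}(L(-n))$ we have $\mathrm{Hom}_{\mh}(L(-n),L(0))=0$, so there are no nonzero coboundaries and $H^1=Z^1$. Writing $\{g_w\mid w\in -n-\Z_{\geq 0}\}$ for the weight basis of $L(-n)=\Delta(-n)$ with the action from Remark \ref{basis}, a $1$-cocycle $c$ has $c(d_i)$ of weight $i$, hence $c(d_i)\in\mathrm{Hom}_\C(L(-n),L(0))_i\cong(L(-n)_{-i})^{*}$; this forces $c(d_i)=0$ for $i<n$ and $c(d_i)=\gamma_i\,g_{-i}^{*}$ for $i\geq n$. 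Computing the contragredient action $[d_i,g_w^{*}]=(in-w)g_{w-i}^{*}$, the cocycle condition $(j-i)c(d_{i+j})=[d_i,c(d_j)]-[d_j,c(d_i)]$ collapses to the scalar recursion
\begin{equation*}
(j-i)\gamma_{i+j}=(in+j)\gamma_j-(jn+i)\gamma_i,\qquad \gamma_m=0\ \ (m<n).
\end{equation*}

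The core of the proof is the analysis of this recursion. Specialising to $i=-1$ gives $(j+1)\gamma_{j-1}=(j-n)\gamma_j$, which determines every $\gamma_m$ from $\gamma_n$ through the closed form $\gamma_m=\gamma_n\binom{m+1}{n+1}$; hence $c\mapsto\gamma_n$ embeds $Z^1$ into $\C$ and $\dim Z^1\leq 1$ for all $n$. It then remains to decide for which $n$ this unique candidate actually satisfies the recursion for all $i,j$. For $n\geq 2$, comparing the value of $\gamma_{n+1}$ predicted by $i=-1$ with that predicted by $(i,j)=(1,n)$ (where $\gamma_1=0$) forces $(n-2)(n+1)\gamma_n=0$, so $\gamma_n=0$ and $H^1=0$ as soon as $n\geq 3$. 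For $n=1,2$ I would instead verify directly that $\gamma_m=\binom{m+1}{n+1}$ solves the full recursion — an elementary low-degree binomial identity in $i,j$ — giving $\dim H^1=1$; if desired this can be made transparent by writing down the resulting non-split extension module explicitly. The main obstacle is precisely this final dichotomy: confirming that the single candidate cocycle produced by the $i=-1$ relation is consistent with all the remaining cocycle equations exactly when $n\in\{1,2\}$, which is what pins $\lambda=-1,-2$ as the only values with nonzero $\mathrm{Ext}^1$.
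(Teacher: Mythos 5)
Your argument is correct, and it reaches the dichotomy $\lambda=-1,-2$ by a genuinely different route than the paper, even though both start from the same identification of $\Ext^1_{\mathcal{O}}$ with relative Lie algebra cohomology. You compute $H^1(\W,\mh;\hom_{\C}(L(\lambda),L(0)))$ head-on: weight reasons force $c(d_i)\in (L(\lambda)_{-i})^*$, the cocycle identity collapses to the scalar recursion $(j-i)\gamma_{i+j}=(in+j)\gamma_j-(jn+i)\gamma_i$ with $\gamma_m=0$ for $m<n=-\lambda$, and the answer comes from playing the $i=-1$ relation (closed form $\gamma_m=\gamma_n\binom{m+1}{n+1}$, so $\dim Z^1\le 1$) against the $(i,j)=(1,n)$ relation, which gives $(n-2)(n+1)\gamma_n=0$. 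The paper instead uses $L(\lambda)=\Delta(\lambda)=\Ind_{\b}^{\W}\C_\lambda$ and Shapiro's lemma for relative cohomology (citing (3.1.2) in Kumar) to pass to $H^1(\b,\mh;\hom_{\C}(\C_\lambda,L(0)))$ with \emph{one-dimensional} coefficients; there the weight argument concentrates any cocycle on the single element $d_{-\lambda}$, and the fact that $d_1,d_2$ generate $\W^+$ kills everything for $\lambda\in\Z_{\le -3}$ with no recursion at all. The paper's route buys brevity and also covers $\lambda\notin\Z$ and $\lambda\in\Z_{>0}$ for free (the normalization $\omega(d_j)(v_\lambda)=\delta_{\lambda+j,0}v_0$ makes $\omega=0$ whenever $-\lambda\notin\Z_{>0}$), cases you dispose of separately via the preceding lemma and Lemma \ref{Ext}(1); your route buys self-containedness (no appeal to Shapiro's lemma) and produces the nontrivial cocycles, hence the extension modules, explicitly. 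The only loose end in your write-up is the deferred check that $\gamma_m=\binom{m+1}{n+1}$ satisfies the full recursion when $n=1,2$; this is indeed a routine identity — for $n=2$, after clearing denominators both sides equal $(j-i)(i+j)(i+j+1)(i+j-1)$ — so the gap is one of execution, not substance.
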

\begin{proof}
 By (1) in Lemma \ref{Ext6}, we can assume $\lambda\neq 0$.
So $L(\lambda)=\Delta(\lambda)=U(\W)v_\lambda=\C[d_{-1}]v_\lambda$.
Suppose that  $L(0)=\C v_0$

According to (3.1.2) in \cite {K}, we have
\begin{align*}
  \Ext_{\mathcal{O}}^1(L(\lambda),L(0))
  &\cong \Ext_{(\W,\mh)}^1(\Delta(\lambda),L(0))\\
  &\cong\Ext_{(\W,\mh)}^1(U(\W)\otimes_{U(\b)}\C_\lambda,L(0))\\
  &\cong \Ext_{(\b,\mh)}^1(\C_\lambda,L(0))\\
  &\cong H^1(\b,\mh,\hom_{\C}(\C_\lambda,L(0))),
\end{align*}
where $\C_{\lambda}=\C v_\lambda$ is the one dimensional $\mb$-module.

 For $\omega\in C^1(\b,\mh;\hom_{\C}(\C_\lambda,L(0))),k,j\in\Z_{\geq 0}$, we have
\begin{equation}\label{ak}(j-k)\omega(d_{k+j})=[d_k,\omega(d_j)]-[d_j,\omega(d_k)].
\end{equation}

Taking $k=0$, by $\omega(d_0)=0$, we have
$ j\omega(d_{j})=[d_0,\omega(d_j)]$. After multiplying a suitable
scalar, we can assume that $\omega(d_j)(v_\lambda)=\delta_{\lambda+j, 0} v_0$. If $\lambda\in \Z_{\leq -3}$, then $\omega(d_1)=\omega(d_2)=\omega(d_{-1})=0$, hence
$\omega=0$ and $\dim\text{Ext}_{\mathcal{O}}^1(L(\lambda),L(0))=0$. If $\lambda=-2$, then $\omega(d_2)(v_{-2})=v_0$, $\omega(d_j)=0$ for any $j\neq 2$. So from $B^1(\b,\mh;\hom_{\C}(\C_\lambda,L(0)))=0$, we have  $\dim\text{Ext}_{\mathcal{O}}^1(L(-2),L(0))=1$. Similarly $\dim\text{Ext}_{\mathcal{O}}^1(L(-1),L(0))=1$.
\end{proof}

We can summarize the results on extensions of simple modules as follows:

\begin{theorem}\label{the}Suppose that $\lambda, \mu\in \C$. Then
\begin{equation*}
\emph{Ext}^1_{\mathcal{O}}(L(\mu),L(\lambda))=
\begin{cases}
 \C,\  \text{if}\  \lambda-\mu=2,3,4, \lambda\mu \neq 0;\\
 \C, \ \text{if}\  \ (\lambda,\mu)=(0, -1), (0,-2),(-1, 0)\ \text{or}\ (4,-1);\\
 \C, \ \text{if}\  \ (\lambda,\mu)=(\frac{5\pm \sqrt{19}}{2},\frac{-7\pm \sqrt{19}}{2});\\
 0, \ \text{otherwise}.
\end{cases}
\end{equation*}
\end{theorem}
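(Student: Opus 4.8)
The plan is to assemble the theorem from the extension computations already performed in the preceding lemmas, organizing the argument as a case analysis according to whether $\lambda$ or $\mu$ vanishes. The three regions $\lambda\mu \neq 0$, $\mu = 0$, and $\lambda = 0$ exhaust $\C^2$, their only common point being $(\lambda,\mu)=(0,0)$, and on each region one of Proposition \ref{Ext7}, Lemma \ref{Ext6}, or Lemma \ref{ext-1-0} gives the answer directly. The governing observation is Lemma \ref{Verma}(2): whenever $\nu \neq 0$ one has $L(\nu) = \Delta(\nu)$, so away from the coordinate axes the simple-module extension groups coincide verbatim with the Verma-module extension groups, whereas on the axes, where $\Delta$ and $L$ genuinely differ, a separate computation is needed.

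On the open locus $\lambda\mu \neq 0$ I would use $L(\lambda)=\Delta(\lambda)$ and $L(\mu)=\Delta(\mu)$ to identify $\text{Ext}^1_{\mathcal{O}}(L(\mu),L(\lambda))$ with $\text{Ext}^1_{\mathcal{O}}(\Delta(\mu),\Delta(\lambda))$ and then read off Proposition \ref{Ext7}. Restricting its list to $\lambda,\mu \neq 0$ keeps the cases $\lambda-\mu \in \{2,3,4\}$, the pair $(4,-1)$, and the irrational pairs $(\tfrac{5\pm\sqrt{19}}{2},\tfrac{-7\pm\sqrt{19}}{2})$ (whose difference is $6\in\Z$), while the entries $(0,-1)$ and $(0,-5)$ of that proposition are discarded because they force $\lambda=0$. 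This yields the first line, the irrational entry, and the $(4,-1)$ entry of the table; the non-integral sub-case $\lambda-\mu\notin\Z$ is automatically included, since every nonzero entry of Proposition \ref{Ext7} has integral weight difference, matching the earlier lemma that $\text{Ext}^1_{\mathcal{O}}(L(\lambda),L(\mu))=0$ for $\lambda-\mu\notin\Z$.

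It remains to treat the two axes, and this is where the subtlety of the theorem lies, because passing from $\Delta$ to $L$ changes several entries. For $\mu=0$, Lemma \ref{Ext6} gives $\text{Ext}^1_{\mathcal{O}}(L(0),L(\lambda))=\C$ exactly when $\lambda=-1$ and $0$ otherwise, contributing the entry $(\lambda,\mu)=(-1,0)$; for $\lambda=0$, Lemma \ref{ext-1-0} gives $\dim\text{Ext}^1_{\mathcal{O}}(L(\mu),L(0))=1$ exactly when $\mu\in\{-1,-2\}$, contributing $(0,-1)$ and $(0,-2)$. The main obstacle, already dissolved inside these two lemmas, is precisely the failure of the Verma picture to survive the quotient by $\Delta(-1)\subset\Delta(0)$: the difference-$2,3,4$ Verma extensions with one weight equal to $0$, together with the $(0,-5)$ Verma extension, all collapse to split extensions of simples, while the genuinely new extension $(0,-2)$ appears. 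Finally I would check the consistency of the three regions at their overlap $(0,0)$, where both Lemma \ref{Ext6}(1) and Lemma \ref{ext-1-0} return $0$, and confirm that the union of the three regions covers every pair $(\lambda,\mu)$; combining the three tabulations then gives exactly the stated formula.
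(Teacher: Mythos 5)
Your proposal is correct and follows essentially the same route as the paper: the paper's proof of Theorem \ref{the} is precisely an assembly of the preceding results, using Lemma \ref{Verma}(2) to identify $L(\nu)=\Delta(\nu)$ for $\nu\neq 0$ so that Proposition \ref{Ext7} settles the locus $\lambda\mu\neq 0$, with Lemma \ref{Ext6} and Lemma \ref{ext-1-0} handling the two axes $\mu=0$ and $\lambda=0$, exactly as in your three-region case analysis.
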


It should be mentioned that extensions between simple modules for the finite dimensional Witt algebra $W(1, 1)$ over an algebraically closed field of characteristic $p>3$  were determined in \cite{BNW1}.

\subsection{Block decomposition of $\mathcal{O}$}

We first recall the notion of blocks of an abelian category $\mathcal{C}$. We assume that any object of $\mathcal{C}$ has  finite composition length. We introduce an equivalence relation on the set of isomorphism classes of
simple objects of $\mathcal{C}$ as follows: two simple objects $V, V '$ are equivalent if there exists
a sequence $V = V_1, V_2, \dots, V_r = V '$ of simple objects satisfying $\text{Ext}^1_{\mathcal{C}}(V_i, V_{i+1}) \neq 0$
or $\text{Ext}^1_{\mathcal{C}}(V_{i+1}, V_i) \neq 0$ for all $i$. Then for each equivalence class $\chi$, we denote by $\mathcal{C}_{\chi}$
the full subcategory of $\mathcal{C}$ consisting of objects whose all composition factors belong
to $\chi$. Each $\mathcal{C}_{\chi}$ is called a block of $\mathcal{C}$ and
\begin{equation}\label{block}\mathcal{C} =\oplus_{\chi}\mathcal{C}_{\chi}.\end{equation} Moreover, each $\mathcal{C}_{\chi}$ cannot be decomposed
into a direct sum of two nontrivial abelian full subcategories.
The decomposition in  (\ref{block}) is called the block decomposition of the category $\mathcal{C}$.

For any $\lambda\in \Z$, let $\mathcal{O}_{[\lambda]}$ be the full subcategory of $\mathcal{O}$ consisting of modules $M$ such that
$\text{Supp}M\subset \lambda+\Z$.

\begin{proposition}We have the block decomposition $\mathcal{O} =\oplus_{\lambda\in \C/\Z}\mathcal{O}_{[\lambda]}$, each $\mathcal{O}_{[\lambda]}$ is indecomposable. The set $\{L(\lambda+n)\mid n\in \Z\}$ is the set of  all simple modules in $\mathcal{O}_{[\lambda]}$.
\end{proposition}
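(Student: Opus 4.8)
The plan is to identify the Ext-equivalence classes of simple modules with the cosets in $\C/\Z$, after which both the indexing of the blocks and the description of their simple objects follow from the general block formalism recalled above, together with Corollary \ref{finite}, which guarantees finite composition length so that the formalism applies. Since the only simple objects of $\mathcal{O}$ are the $L(\mu)$, $\mu\in\C$, it suffices to analyze the graph whose vertices are these $L(\mu)$ and whose edges join $L(\mu)$ and $L(\lambda)$ whenever $\text{Ext}^1_{\mathcal{O}}(L(\mu),L(\lambda))\neq 0$ or $\text{Ext}^1_{\mathcal{O}}(L(\lambda),L(\mu))\neq 0$.

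First I would show that each connected component of this graph lies in a single coset. Inspecting every nonzero entry of Theorem \ref{the}, one checks that an edge between $L(\mu)$ and $L(\lambda)$ forces $\lambda-\mu\in\Z$: the generic cases give $\lambda-\mu\in\{2,3,4\}$, the exceptional integral cases give differences $1$, $2$ and $5$, and the two cases $(\lambda,\mu)=(\frac{5\pm\sqrt{19}}{2},\frac{-7\pm\sqrt{19}}{2})$ both give $\lambda-\mu=6$. Hence $L(\mu)$ and $L(\lambda)$ can lie in the same component only if $\mu-\lambda\in\Z$, so each component is contained in one coset $\mu+\Z$.

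Next I would prove the converse, that within each coset all the $L(\mu+n)$, $n\in\Z$, are connected. For a non-integral coset (so $\mu\notin\Z$) every weight $\mu+n$ is nonzero, so the generic rule gives edges between $L(\mu+n)$ and $L(\mu+n+2)$ and between $L(\mu+n)$ and $L(\mu+n+3)$ for all $n$; since $3-2=1$, steps of $\pm2$ and $\pm3$ reach every integer and the coset is connected. The integral coset (the block with simples $L(n)$, $n\in\Z$) is the delicate case, since the generic edges involving the weight $0$ are all killed by the condition $\lambda\mu\neq0$, so $L(0)$ would be an isolated vertex were it not for the exceptional extensions. Here I would use that the generic steps of $2$ and $3$ already connect all nonzero integers into a single component — the positive integers among themselves, the negative integers among themselves (for instance $L(-2)\sim L(-4)\sim L(-1)$ joins $-1$ and $-2$), and the two halves via $\text{Ext}^1_{\mathcal{O}}(L(-1),L(1))\neq0$ — and then attach $L(0)$ through the exceptional edge $\text{Ext}^1_{\mathcal{O}}(L(0),L(-1))\neq0$ coming from the entry $(\lambda,\mu)=(-1,0)$ of Theorem \ref{the}. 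This shows the whole coset $\Z$ forms a single component.

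Finally I would record that $\mathcal{O}_{[\lambda]}$, defined by $\text{Supp}\,M\subset\lambda+\Z$, is exactly the block attached to the coset $[\lambda]$: any composition factor $L(\mu)$ of such an $M$ has $\mu\in\text{Supp}\,M\subset\lambda+\Z$, and conversely a module all of whose composition factors are of the form $L(\mu)$ with $\mu\in\lambda+\Z$ has support in $\lambda+\Z$ because each $L(\mu)$ has support in $\mu+\Z_{\leq0}$. The decomposition $\mathcal{O}=\oplus_{\lambda\in\C/\Z}\mathcal{O}_{[\lambda]}$, the indecomposability of each $\mathcal{O}_{[\lambda]}$, and the stated list of its simple objects then all follow from the general block formalism. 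The main obstacle is the connectivity analysis of the integral block: one must verify carefully that the exceptional extensions in Theorem \ref{the} are precisely what prevents $L(0)$ from splitting off, since no generic extension ever touches the weight $0$.
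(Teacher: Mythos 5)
Your proof is correct and follows the same route as the paper: the paper's own proof is the single line ``This result follows from Theorem \ref{the},'' i.e.\ exactly the combination of the Ext-computation with the general block formalism that you carry out. Your write-up simply makes explicit the details the paper leaves to the reader --- that all nonzero Ext-groups in Theorem \ref{the} occur between weights differing by an integer, that steps of $2$ and $3$ (plus the exceptional edges at $(0,-1)$, $(0,-2)$, $(-1,0)$ attaching $L(0)$) connect each coset, and that membership of all composition factors in a coset is equivalent to $\mathrm{Supp}\,M\subset\lambda+\Z$ --- and these details check out against the theorem's table.
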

\begin{proof} This result follows from Theorem \ref{the}.
\end{proof}

Let $\C\langle x_1, x_2\rangle$ be the free associative algebra over $\C$ in two variables $x_1, x_2$.  Recall that an abelian category $\mathcal{C}$ is wild if there exists an exact functor from the category of finite dimensional representations
 of the algebra $\C\langle x_1, x_2\rangle$ to $\mathcal{C}$ which preserves indecomposability and takes non-isomorphic modules to
 non-isomorphic ones, see Definition 2 in \cite{Mak}. The following Lemma is useful for the study of representations of infinite dimensional algebras. For its proof, one can see Proposition 2.1 in \cite{G}

  \begin{lemma}\label{wild} Let $\mathcal{C}$ be an abelian category. If the Ext-quiver of
  $\mathcal{C}$ contains a finite subquiver $Q$ whose underlying unoriented graph is neither a Dynkin nor an affine diagram such that two arrows in $Q$ can not  be concatenated, then $\mathcal{C}$ is wild.
\end{lemma}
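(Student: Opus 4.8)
The plan is to produce, straight from the definition of wildness recalled above, an exact representation embedding --- that is, an exact functor preserving indecomposability and sending non-isomorphic objects to non-isomorphic ones --- from $\mathrm{rep}(\C\langle x_1,x_2\rangle)$ into $\mathcal{C}$, by factoring it as
\[
\mathrm{rep}(\C\langle x_1,x_2\rangle)\xrightarrow{\ G\ }\mathrm{rep}(Q)\xrightarrow{\ H\ }\mathcal{C},
\]
where $\mathrm{rep}(Q)$ is the category of finite-dimensional representations of the subquiver $Q$. Here $G$ is a purely quiver-theoretic input and $H$ is built from the $\mathrm{Ext}^1$-data of $\mathcal{C}$. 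A composition of a representation embedding with an exact fully faithful functor is again a representation embedding (it stays exact, preserves indecomposability, and reflects isomorphism), so it suffices to construct $G$ and $H$ with these properties. Since Dynkin and affine diagrams are connected, I may and do assume $Q$ is connected.

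First I would supply $G$. The path algebra $\C Q$ is finite-dimensional and hereditary, and $Q$ is acyclic: the no-concatenation hypothesis forbids paths of length $\geq 2$, in particular oriented cycles. By the Gabriel--Nazarova--Donovan--Freislich trichotomy, a connected acyclic quiver whose underlying graph is neither a Dynkin nor an affine diagram has wild representation type, and this is precisely the existence of an exact representation embedding $G\colon\mathrm{rep}(\C\langle x_1,x_2\rangle)\to\mathrm{rep}(Q)$ preserving indecomposability and reflecting isomorphism. This step is classical and I would simply invoke it.

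The construction of $H$ is the core of the argument, and it is where the no-concatenation hypothesis is decisive: it says exactly that no arrow's tail is another arrow's head, so the vertex set of $Q$ splits into a set $A$ of sources and a disjoint set $B$ of sinks, with every arrow running from $A$ to $B$. Write $S_v$ for the simple object of $\mathcal{C}$ at a vertex $v$, and fix for each arrow $\alpha\colon i\to j$ a vector $\xi_\alpha\in\mathrm{Ext}^1_{\mathcal{C}}(S_i,S_j)$ so that, for each pair $(i,j)$, the chosen $\xi_\alpha$ form a basis (possible since the number of arrows $i\to j$ equals $\dim\mathrm{Ext}^1_{\mathcal{C}}(S_i,S_j)$). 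To a representation $V=\big((V_v)_v,(V_\alpha)_\alpha\big)$ of $Q$ I attach the object $M(V)$ given as the middle term of the extension
\[
0\longrightarrow\bigoplus_{j\in B}S_j\otimes V_j\longrightarrow M(V)\longrightarrow\bigoplus_{i\in A}S_i\otimes V_i\longrightarrow 0
\]
whose class is $\sum_{\alpha}\xi_\alpha\otimes V_\alpha$ under
\[
\mathrm{Ext}^1_{\mathcal{C}}\Big(\bigoplus_{i\in A}S_i\otimes V_i,\ \bigoplus_{j\in B}S_j\otimes V_j\Big)\cong\bigoplus_{i\in A,\,j\in B}\mathrm{Ext}^1_{\mathcal{C}}(S_i,S_j)\otimes\mathrm{Hom}_{\C}(V_i,V_j).
\]
A morphism of representations induces compatible morphisms of the two graded pieces, hence a morphism of extensions, so $H$ is a functor; its exactness follows from the exactness of $V\mapsto\bigoplus_j S_j\otimes V_j$ and $V\mapsto\bigoplus_i S_i\otimes V_i$ together with a nine-lemma diagram chase applied to a short exact sequence of representations.

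The step I expect to be the main obstacle is the full faithfulness of $H$, which is exactly what forces the separation hypothesis into play and uses Schur's lemma $\mathrm{End}_{\mathcal{C}}(S_v)=\C$, $\mathrm{Hom}_{\mathcal{C}}(S_u,S_v)=0$ for $u\neq v$ (valid because $\mathcal{C}$ is Hom-finite and $\C$-linear, as holds for the blocks of $\mathcal{O}$). Given $\phi\colon M(V)\to M(V')$, the composite from the sub-object $\bigoplus_{j}S_j\otimes V_j$ to the quotient $\bigoplus_i S_i\otimes V'_i$ vanishes because $A\cap B=\varnothing$ gives $\mathrm{Hom}_{\mathcal{C}}(S_j,S_i)=0$; hence $\phi$ preserves the sub-object and induces a pair $(\phi_A,\phi_B)$. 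This pair determines $\phi$ uniquely, since the indeterminacy lies in $\mathrm{Hom}_{\mathcal{C}}\big(\bigoplus_i S_i\otimes V_i,\bigoplus_j S_j\otimes V'_j\big)=0$, again by $\mathrm{Hom}_{\mathcal{C}}(S_i,S_j)=0$. Schur identifies $(\phi_A,\phi_B)$ with families $V_v\to V'_v$, and compatibility of $\phi$ with the two extension classes, combined with the linear independence of the $\xi_\alpha$, forces $V'_\alpha\phi_{s(\alpha)}=\phi_{t(\alpha)}V_\alpha$ for every arrow $\alpha$, i.e. a morphism in $\mathrm{rep}(Q)$. Thus $\mathrm{Hom}_{\mathcal{C}}(M(V),M(V'))\cong\mathrm{Hom}_Q(V,V')$, so $H$ is fully faithful and exact; composing with $G$ produces the required functor and proves that $\mathcal{C}$ is wild.
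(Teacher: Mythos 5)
Your proof is essentially correct, but note that the paper does not prove this lemma at all: it is quoted with the proof outsourced to Proposition 2.1 of \cite{G}. What you have written is, in substance, a reconstruction of the argument behind that citation: the no-concatenation hypothesis makes $Q$ bipartite with every arrow running from a source to a sink; one realizes a representation $V$ of $Q$ as the middle term of an extension of $\bigoplus_{i\in A}S_i\otimes V_i$ by $\bigoplus_{j\in B}S_j\otimes V_j$ with class $\sum_\alpha \xi_\alpha\otimes V_\alpha$; one checks this functor is exact and fully faithful (full faithfulness resting on $\mathrm{Hom}_{\mathcal{C}}(S_u,S_v)=0$ for $u\neq v$ and linear independence of the $\xi_\alpha$); and one composes with the classical fact that a connected quiver whose underlying graph is neither Dynkin nor affine admits an exact representation embedding of $\mathrm{rep}(\C\langle x_1,x_2\rangle)$. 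So you have filled in the route the paper declines to take, rather than found a different one. Two caveats are worth recording. First, your reduction ``assume $Q$ connected'' does not follow from the hypothesis as literally stated: a disjoint union of two Dynkin diagrams is neither a Dynkin nor an affine diagram, yet its representation category is representation-finite, so the lemma is only true under the reading ``not a disjoint union of Dynkin and affine diagrams''; under that (clearly intended) reading your reduction is exactly right, and in the paper's application (Theorem \ref{wild1}) the subquiver is connected anyway. Second, Schur's lemma in the form $\mathrm{End}_{\mathcal{C}}(S_v)=\C$ and the ability to choose the $\xi_\alpha$ linearly independent require $\mathcal{C}$ to be $\C$-linear with finite-dimensional $\mathrm{Ext}^1$-spaces, hypotheses implicit in the notion of Ext-quiver and satisfied by the blocks of $\mathcal{O}$, as you correctly flag; also, for a subquiver the number of arrows $i\to j$ is only at most $\dim\mathrm{Ext}^1_{\mathcal{C}}(S_i,S_j)$, not necessarily equal to it, but linear independence is all your full-faithfulness argument actually uses, so nothing breaks.
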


 \begin{theorem}\label{wild1} For any $\lambda\in\C$, the block $\mathcal{O}_{[\lambda]}$ is wild.
\end{theorem}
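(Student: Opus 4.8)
The plan is to apply Lemma \ref{wild} directly, so the entire task reduces to exhibiting inside the Ext-quiver of a fixed block $\mathcal{O}_{[\lambda]}$ a finite subquiver $Q$ whose underlying unoriented graph is neither Dynkin nor affine, and in which no two arrows share a vertex (the ``cannot be concatenated'' condition). By the block decomposition, every block is of the form $\mathcal{O}_{[\lambda]}$ with simple objects $\{L(\lambda+n)\mid n\in\Z\}$; since the integral block $\mathcal{O}_{[0]}$ and a generic block $\mathcal{O}_{[\lambda]}$ with $\lambda\notin\Z$ have slightly different Ext-data (the extra extensions at $(0,-1),(0,-2),(-1,0)$ and the $\frac{5\pm\sqrt{19}}{2}$ pairs only occur for specific weights), I would first reduce to the ``generic'' extensions coming from the universal rule $\emph{Ext}^1_{\mathcal{O}}(L(\mu),L(\lambda))=\C$ whenever $\lambda-\mu\in\{2,3,4\}$ and $\lambda\mu\neq0$, since these hold in every block and suffice.

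First I would read off from Theorem \ref{the} the arrows available among the vertices $L(\lambda+n)$. Translating $\lambda-\mu=2,3,4$, there is an arrow between $L(\lambda+n)$ and $L(\lambda+m)$ precisely when $|n-m|\in\{2,3,4\}$ (avoiding the weights $0,-1,-2$ when working in $\mathcal{O}_{[0]}$, which I can do by choosing vertices with large enough index). The key step is then to select a small finite set of vertices, say indexed by integers $n_1<n_2<\dots<n_k$ with all pairwise gaps in $\{2,3,4\}$ realized only between disjoint pairs, so that the induced subgraph is a disjoint union of edges plus possibly one extra edge forming a tree or cycle whose graph is forbidden. Concretely, I would look for vertices whose gap pattern produces a subquiver that is a single vertex of valence $\geq 3$ attached to three leaves — but since concatenation is forbidden, instead I must use a matching-type configuration. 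The cleanest choice: pick indices realizing a graph such as $\widetilde{D}_4$-forbidden shape or, more safely, a disjoint set of edges together with one configuration giving a graph with too many edges relative to vertices. The decisive observation is that with gaps $2,3,4$ available one can arrange, for instance, vertices at positions giving a subgraph that is a \emph{disjoint union of edges} is Dynkin, so that will not work; I would instead seek a connected non-Dynkin/non-affine tree realized with non-adjacent arrows, which the concatenation restriction makes delicate.

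The main obstacle, and the step I expect to absorb most of the effort, is precisely reconciling the two constraints in Lemma \ref{wild}: the arrows must be numerous enough that the unoriented graph escapes the Dynkin/affine list, yet sparse enough that no two arrows meet at a vertex. Since ``no two arrows concatenate'' forces the arrow set to be a matching, I believe the intended reading is that $Q$ is taken with its arrows forming a matching on a vertex set where the \emph{underlying graph} (counting all edges, including multiplicities or the ambient structure) is wild; I would verify carefully which of the finitely many small forbidden graphs can be realized as a matching among vertices $L(\lambda+n_i)$ with gaps in $\{2,3,4\}$. A promising candidate is to take five or six weights whose Ext-pattern yields a graph containing, as its underlying shape, something like two parallel edges between a pair of vertices (a double edge, i.e. $\dim\emph{Ext}^1>1$ in the relevant direction, giving an affine $A_1^{(1)}$ which is still affine and hence excluded), so I would push instead to a triple edge or to a graph on more vertices. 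I would finish by exhibiting the explicit index set, drawing $Q$, and checking against the classification of Dynkin and affine diagrams that its underlying graph appears on neither list, whence Lemma \ref{wild} yields wildness of $\mathcal{O}_{[\lambda]}$.
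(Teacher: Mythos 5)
Your overall strategy---apply Lemma \ref{wild} after discarding the sporadic extensions and keeping only the generic rule $\mathrm{Ext}^1_{\mathcal{O}}(L(\mu),L(\lambda))=\C$ for $\lambda-\mu\in\{2,3,4\}$, $\lambda\mu\neq 0$---is exactly the paper's, but the proof collapses at the decisive point because you misread the hypothesis of Lemma \ref{wild}. ``Two arrows in $Q$ cannot be concatenated'' does not mean that no two arrows share a vertex (a matching); it means that no arrow starts where another one ends, i.e.\ $Q$ contains no directed path of length two. Arrows may freely share a common source or a common target. Under your matching interpretation the task is hopeless---the underlying graph of a matching is a disjoint union of single edges, which can never yield a legitimate wildness criterion---and this misreading is precisely why your attempt dissolves into speculation about double and triple edges and forbidden shapes ``realized with non-adjacent arrows'', and never produces the required subquiver. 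A proof of this theorem must end with an explicit $Q$; yours explicitly defers that step, so nothing is actually proved.

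With the correct reading the construction is short, and it is what the paper does: all generic arrows point ``upward'' by $2$, $3$ or $4$, so choose $\mu\in\lambda+\Z$ with $\mu(\mu-1)(\mu-2)(\mu-3)(\mu-4)\neq 0$ (this handles both the integral and non-integral blocks, as you anticipated) and take $Q$ to have sources $L(\mu-3)$, $L(\mu-4)$, sinks $L(\mu)$, $L(\mu-1)$, $L(\mu-2)$, and the five arrows $L(\mu-3)\to L(\mu)$ (gap $3$), $L(\mu-3)\to L(\mu-1)$ (gap $2$), $L(\mu-4)\to L(\mu)$ (gap $4$), $L(\mu-4)\to L(\mu-1)$ (gap $3$), $L(\mu-4)\to L(\mu-2)$ (gap $2$). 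Since every vertex of $Q$ is only a source or only a sink, no two arrows can be concatenated; and the underlying unoriented graph is a four-cycle with one pendant edge (five vertices, five edges), which is neither a Dynkin nor an affine diagram. Lemma \ref{wild} then gives wildness of $\mathcal{O}_{[\lambda]}$. Note also that a subquiver need not be induced: the arrow $L(\mu-2)\to L(\mu)$, which does exist in the Ext-quiver, is simply omitted from $Q$, and this omission is what makes the no-concatenation condition satisfiable.
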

\begin{proof} By Theorem \ref{the}, the Ext-quiver of every block  $\mathcal{O}_{[\lambda]}$  contains the following subquiver:
$$\xymatrix{
                &  L(\mu)               \\
 L(\mu-3) \ar[ur]\ar[dr]&  &    L(\mu-4)\ar[ul] \ar[dl] \ar[r]&L(\mu-2) \\
 &  L(\mu-1)  ,
     }$$
where $\mu\in \lambda+\Z$  such that $\mu( \mu-1)(\mu-2)(\mu-3)(\mu-4) \neq 0$.
This subquiver is  neither a Dynkin nor an affine diagram. So the  block  $\mathcal{O}_{[\lambda]}$ is wild by Lemma \ref{wild}.
\end{proof}

\begin{remark} We can compare the category $\mathcal{O}$ of $\W$ with the category $\mathcal{O}_{\mathfrak{sl}_2}$ of $\mathfrak{sl}_2$. Each non-regular block of $\mathcal{O}_{\mathfrak{sl}_2}$ is semi-simple. Every regular block of $\mathcal{O}_{\mathfrak{sl}_2}$ is
equivalent to the category of finite dimensional representations over $\mathbb{C}$ of
the following quiver
\begin{displaymath}\label{quiver}
\xymatrix{\bullet\ar@/^/[rr]^{a}&&\bullet\ar@/^/[ll]^{b}},\qquad ab=0.
\end{displaymath} So every block of $\mathcal{O}_{\mathfrak{sl}_2}$ is not wild.
It should be mentioned that representation types of all blocks of $\mathcal{O}$ for complex simple Lie algebras were independently obtained in \cite{BKM} and \cite{FNP}.
\end{remark}

\subsection{Relation between $\mathcal{O}$ and a subalgebra $H_1$ of $U(\mb)$}\label{functor}

In order to find the connection between $\mathcal{O}$ and finite dimensional  modules
 over some associative algebra, we define  $$H_1=\{u\in U(\mb)\mid u(d_{-1}-1)\subset (d_{-1}-1)U(\W)\},$$
which is a subalgebra of $U(\mb)$.

Let $N_{1}=\C v_1$ be the one dimensional right $\C[d_{-1}]$-module such that $v_1 \cdot d_{-1} = v_1 $. Let $Q'_{1}$ the induced right $U(\W)$-module from the right $\C[d_{-1}]$-module $N_{1}$, and
\begin{equation}\label{Ha}H'_{1}=\text{End}_{\W}(Q'_{1}).\end{equation}

\begin{lemma} The algebra $H_1 $ is isomorphic to the algebra $H'_1$.
\end{lemma}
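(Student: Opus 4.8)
The plan is to realize $Q'_1$ as a cyclic right $U(\W)$-module, compute its endomorphism algebra as the quotient of an idealizer of a right ideal, and then match this quotient with $H_1$ by means of a PBW decomposition. First I would identify $Q'_1$ explicitly. Since $N_1=\C v_1\cong \C[d_{-1}]/(d_{-1}-1)\C[d_{-1}]$ as a right $\C[d_{-1}]$-module, induction gives
$$Q'_1=N_1\otimes_{\C[d_{-1}]}U(\W)\cong U(\W)/(d_{-1}-1)U(\W)$$
as right $U(\W)$-modules, with $v_1\otimes u$ corresponding to the coset $\bar u=u+(d_{-1}-1)U(\W)$. Write $I:=(d_{-1}-1)U(\W)$, a right ideal of $U(\W)$. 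A right $U(\W)$-module endomorphism $\phi$ of $U(\W)/I$ is determined by $\phi(\bar 1)=\bar a$, and is well defined precisely when $aI\subseteq I$, i.e.\ when $a(d_{-1}-1)\in I$. Thus, writing $\mathcal N(I)=\{a\in U(\W)\mid a(d_{-1}-1)\in(d_{-1}-1)U(\W)\}$ for the idealizer of $I$, the assignment $\phi\mapsto \bar a$ yields an algebra isomorphism $H'_1=\operatorname{End}_{\W}(Q'_1)\cong \mathcal N(I)/I$; one checks $\phi\circ\psi\mapsto \overline{ab}$, so the order of multiplication is preserved. Note that $H_1=\mathcal N(I)\cap U(\mb)$ by definition, and that $I$ is a two-sided ideal of the subalgebra $\mathcal N(I)$ with $I\subseteq\mathcal N(I)$.

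The heart of the argument is the vector space decomposition
$$U(\W)=U(\mb)\oplus(d_{-1}-1)U(\W). \qquad (\star)$$
Granting $(\star)$, I would finish as follows. The inclusion $H_1=\mathcal N(I)\cap U(\mb)\hookrightarrow\mathcal N(I)$ followed by the quotient $\mathcal N(I)\twoheadrightarrow\mathcal N(I)/I$ is an algebra homomorphism (both $\mathcal N(I)$ and $U(\mb)$ are subalgebras of $U(\W)$). It is injective because $U(\mb)\cap I=0$ by $(\star)$. For surjectivity, given $a\in\mathcal N(I)$, use $(\star)$ to write $a=u+(d_{-1}-1)w$ with $u\in U(\mb)$; since $I\subseteq\mathcal N(I)$, the element $u=a-(d_{-1}-1)w$ again lies in $\mathcal N(I)$, hence in $H_1$, and maps to $\bar a$. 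Therefore the composite is an algebra isomorphism $H_1\xrightarrow{\ \sim\ }\mathcal N(I)/I\cong H'_1$.

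It remains to prove $(\star)$, which is the main obstacle. By the PBW theorem for the ordering $d_{-1}<d_0<d_1<\cdots$ one has $U(\W)=\bigoplus_{k\ge 0}d_{-1}^{\,k}\,U(\mb)$ as vector spaces. For $U(\W)=U(\mb)+I$, observe that left multiplication by $d_{-1}$ is the identity on $U(\W)/I$, since $\overline{d_{-1}r}=\overline{(d_{-1}-1)r}+\bar r=\bar r$; applying this repeatedly reduces every PBW monomial $d_{-1}^{\,k}u$ (with $u\in U(\mb)$) to $\bar u$, so each coset has a representative in $U(\mb)$. For directness I would show $U(\mb)\cap I=0$: suppose $u\in U(\mb)$ satisfies $u=(d_{-1}-1)w$ with $w=\sum_{k=0}^{n}d_{-1}^{\,k}w_k$, $w_k\in U(\mb)$ and $w_n\ne 0$. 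In the decomposition $\bigoplus_k d_{-1}^{\,k}U(\mb)$ the top component of $(d_{-1}-1)w$ equals $d_{-1}^{\,n+1}w_n\ne 0$, so $(d_{-1}-1)w$ has $d_{-1}$-degree $n+1\ge 1$, contradicting $u\in U(\mb)$ unless $w=0$. This proves $(\star)$.

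The genuinely delicate points are two. First, keeping the order of composition straight in the idealizer isomorphism, so as to conclude $H_1\cong H'_1$ rather than the opposite algebra; this is settled by the direct computation $\phi\circ\psi\mapsto\overline{ab}$. Second, verifying that the $U(\mb)$-component of an idealizer element stays in the idealizer, which is exactly what makes the natural map surjective; this uses $(\star)$ together with $I\subseteq\mathcal N(I)$. Both reduce, in the end, to the PBW decomposition $(\star)$, so the bulk of the work lies in establishing that directness.
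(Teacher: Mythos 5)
Your proof is correct and is essentially the paper's argument: under your identification $Q'_1\cong U(\W)/(d_{-1}-1)U(\W)$, your composite isomorphism $H_1\xrightarrow{\sim}\mathcal N(I)/I\cong\operatorname{End}_{\W}(Q'_1)$ sends $u$ to exactly the endomorphism $\psi_u$ with $\psi_u(v_1)=v_1u$ that the paper writes down. The paper dismisses the verification with ``one can check''; your idealizer formulation and the PBW decomposition $U(\W)=U(\mb)\oplus(d_{-1}-1)U(\W)$ supply precisely the omitted details (well-definedness, injectivity, and surjectivity).
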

\begin{proof}For any $u\in H_1$, we define a $\psi_u\in H'_{1}=\text{End}_{\W}(Q'_{1})$ such that $\psi_u(v_1)=v_1u$.
 Then one can check that the linear map $$\psi: H_1\rightarrow H'_1, u\mapsto \psi_u, $$ is an algebra isomorphism.

\end{proof}

%\begin{lemma} For any nonzero $a\in \C$, we have:
% \begin{enumerate}[$($a$)$]
% \item \label{isom}The algebra $H''_a $ is isomorphic to the algebra $H_a$ defined in (\ref{Ha}).
% \item $H'_a\cong H'_1$, $H''_a\cong H''_1$,   $H'_a\cong H''_a$.
%  \end{enumerate}
%\end{lemma}
%\begin{proof}(a) For any $u\in H''_a$, we define a $\psi_u\in H_{a}=\text{End}_{\W}(Q_{a})^{\text{op}}$ such that $\psi_u(1_a)=u1_a$.
% Then one can check that the linear map $$\psi: H''_a\rightarrow H_{a}, u\mapsto \psi_u, $$ is an algebra isomorphism.
%
%(b) Let $\sigma_a$ be the isomorphism of $U(\mathfrak{W})$ such that $\sigma_a(d_i)=a^{-i}d_i$,  for any $i\in \Z_{\geq -1}$. If $u\in H'_a$,
%then we can see that $\sigma_a(u)\in H'_1$. So  $H'_a\cong H'_1$. Similarly
%$H''_a\cong H''_1$.
%
%By the property of universal enveloping algebra, there is a
%linear map $\tau: U(\mathfrak{W})\rightarrow U(\mathfrak{W}) $ such that
%$$\tau(d_{i_1}^{k_1}d_{i_2}^{k_2}\cdots d_{i_r}^{k_r})=(-1)^{k_1+\dots+k_r}d_{i_r}^{k_r}\cdots d_{i_2}^{k_2}d_{i_1}^{k_1}, \tau(c)=c, \ \ \forall\ c\in \C,$$
%where $i_1,\dots, i_r\in \Z_{\geq -1}, k_1,\dots, k_r\in \Z_{\geq 0}$. So
%$\tau$ is an anti-involution of $ U(\mathfrak{W})$.
%\end{proof}

We define a functor $\Gamma$  from $\mathcal{O}$ to the category of finite dimensional $H_1$-modules. For any module $M\in \mathcal{O}$, let
$$\Gamma(M)=M/(d_{-1}-1)M.$$
Clearly $\Gamma(M)$ is an $H_1 $-module. The following result is immediate.

\begin{lemma}\label{image}
$ \dim \Gamma(L(\lambda))=\begin{cases}
1, \ \text{if}\ \lambda\neq 0 ,\\
0, \ \text{if}\ \lambda= 0.
\end{cases}$
\end{lemma}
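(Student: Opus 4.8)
The plan is to split the computation into the two cases governed by Lemma \ref{Verma}, namely $\lambda\neq 0$ and $\lambda=0$, since the structure of $L(\lambda)$ is radically different in the two cases. In both cases the key observation is that $\Gamma(M)=M/(d_{-1}-1)M$ is nothing but the cokernel of the operator $d_{-1}-1$ acting on $M$, so the whole lemma reduces to understanding how $d_{-1}$ acts on $L(\lambda)$, viewed as a module over the commutative polynomial ring $\C[d_{-1}]$.

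For $\lambda\neq 0$, I would invoke Lemma \ref{Verma}(2) to identify $L(\lambda)=\Delta(\lambda)$, and then use the basis $\{v_{\lambda-i}=d_{-1}^i v_\lambda : i\in\Z_{\geq 0}\}$ to observe that $\Delta(\lambda)=\C[d_{-1}]v_\lambda$ is a \emph{free} $\C[d_{-1}]$-module of rank one. Under the resulting identification $\Delta(\lambda)\cong\C[d_{-1}]$, the operator $d_{-1}-1$ becomes multiplication by $t-1$ on $\C[t]$, whose cokernel is $\C[t]/(t-1)\cong\C$ via evaluation at $t=1$. Hence $\dim\Gamma(L(\lambda))=1$.

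For $\lambda=0$, I would first pin down $L(0)$ explicitly from the exact sequence \eqref{seq} of Lemma \ref{Verma}(3): since $\Delta(-1)=U(\W)v_{-1}=\C[d_{-1}]v_{-1}$ is spanned by $\{v_{-i}:i\geq 1\}$ and equals the image of $d_{-1}$ on $\Delta(0)$, the quotient $L(0)=\Delta(0)/\Delta(-1)$ is one-dimensional, spanned by the image $\bar v_0$ of $v_0$, on which $d_{-1}$ acts as zero (because $d_{-1}v_0=v_{-1}\in\Delta(-1)$). Therefore $(d_{-1}-1)\bar v_0=-\bar v_0$, so $d_{-1}-1$ acts invertibly on $L(0)$, giving $(d_{-1}-1)L(0)=L(0)$ and $\Gamma(L(0))=0$.

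There is no genuine analytic difficulty here; the only step requiring care is the concrete identification of $L(0)$ as the one-dimensional module on which $d_{-1}$ vanishes, which is the sole reason the two cases diverge. Everything else is the elementary cokernel computation $\C[t]/(t-1)\cong\C$ versus invertibility of $d_{-1}-1$, so I expect the verification to be short once Lemma \ref{Verma} is cited.
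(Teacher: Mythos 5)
Your proposal is correct, and it is exactly the verification the paper leaves implicit: the paper states this lemma with no proof at all (``The following result is immediate''). Your two cases --- $L(\lambda)=\Delta(\lambda)\cong\C[d_{-1}]$ free of rank one for $\lambda\neq 0$, so the cokernel of $d_{-1}-1$ is $\C[t]/(t-1)\cong\C$, and $L(0)$ the trivial one-dimensional module on which $d_{-1}-1$ acts invertibly --- are precisely the computation the authors deem immediate, so there is nothing to add.
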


 Let $\Omega$ be the category of finite dimensional
$H_1$-modules.

\begin{theorem}The functor $\Gamma: \mathcal{O}\rightarrow \Omega$ is exact.
\end{theorem}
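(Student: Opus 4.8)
The plan is to reduce the theorem to a single structural fact about $\mathcal{O}$. Observe that $\Gamma(M) = M/(d_{-1}-1)M$ is the cokernel of the operator $d_{-1}-1$ acting on $M$; regarding $M$ as a left $\C[d_{-1}]$-module by restriction, this is canonically identified with $\bigl(\C[d_{-1}]/(d_{-1}-1)\bigr)\otimes_{\C[d_{-1}]} M$. Right exactness of $\Gamma$ is therefore automatic from right exactness of the tensor product (equivalently, cokernel is always right exact). Consequently the entire content of the theorem collapses to showing that $\Gamma$ sends monomorphisms to monomorphisms: given a short exact sequence $0 \to A \xrightarrow{f} B \xrightarrow{g} C \to 0$ in $\mathcal{O}$, I must show the induced map $\Gamma(A) \to \Gamma(B)$ is injective.

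The key step is to prove that $d_{-1}-1$ acts injectively on every object of $\mathcal{O}$, and this is where the geometry of the category enters. By Lemma \ref{filtration}, every module in $\mathcal{O}$ has a finite filtration with highest weight subquotients, so its set of weights is bounded above. Suppose now that $(d_{-1}-1)w = 0$ for some $w \in C$, and decompose $w = \sum_\mu w_\mu$ into its weight components. Since $d_{-1}$ lowers weight by $1$, comparing the weight-$\mu$ components of the two sides of $d_{-1}w = w$ yields the recursion $w_\mu = d_{-1}w_{\mu+1}$. Taking $\mu_0$ to be a maximal weight occurring in $w$, we have $w_{\mu_0+1}=0$, whence $w_{\mu_0} = d_{-1}w_{\mu_0+1}=0$, a contradiction unless $w=0$. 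Thus $d_{-1}-1$ is injective on $C$, and the same argument applies to every object of $\mathcal{O}$.

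With injectivity established, the required monomorphism property follows by a short diagram chase. Suppose $a \in A$ satisfies $f(a) \in (d_{-1}-1)B$, say $f(a) = (d_{-1}-1)b$. Applying $g$ and using $gf = 0$ gives $(d_{-1}-1)g(b) = 0$, so $g(b) = 0$ by injectivity of $d_{-1}-1$ on $C$; hence $b = f(a')$ for some $a' \in A$. Then $f(a) = (d_{-1}-1)f(a') = f\bigl((d_{-1}-1)a'\bigr)$, and injectivity of $f$ forces $a = (d_{-1}-1)a'$, i.e. $a$ already lies in $(d_{-1}-1)A$. This shows $\Gamma(A) \to \Gamma(B)$ is injective, and together with the automatic right exactness and exactness at the middle term it completes the proof.

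The main obstacle is precisely the injectivity of $d_{-1}-1$ on arbitrary objects of $\mathcal{O}$; once that is in place the rest is formal homological algebra. In cohomological terms, this injectivity is the statement that $\mathrm{Tor}_1^{\C[d_{-1}]}\bigl(\C[d_{-1}]/(d_{-1}-1),\, M\bigr) \cong \ker(d_{-1}-1\colon M \to M)$ vanishes for every $M \in \mathcal{O}$, and I would verify that boundedness-above of the support (from Lemma \ref{filtration}) is exactly the hypothesis that rules out a nonzero kernel element.
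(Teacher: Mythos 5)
Your proof is correct and follows essentially the same route as the paper's: the single non-formal input in both arguments is that $d_{-1}-1$ acts injectively on weight modules (applied to the quotient term of the short exact sequence), followed by the identical diagram chase to get injectivity of $\Gamma$ on the submodule. The only differences are cosmetic — you obtain surjectivity and exactness at the middle term formally from right exactness of $\bigl(\C[d_{-1}]/(d_{-1}-1)\bigr)\otimes_{\C[d_{-1}]}-$, where the paper chases them directly, and you supply a proof (via the weight-component recursion $w_\mu=d_{-1}w_{\mu+1}$) of the injectivity of $d_{-1}-1$ that the paper merely asserts.
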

\begin{proof}Suppose that \begin{equation}
0 \rightarrow N \xrightarrow{\alpha} M \xrightarrow{\beta}  L \rightarrow 0,
\end{equation} is an exact sequence in $\mathcal{O}$. We will show that
\begin{equation}
0 \rightarrow \Gamma(N) \xrightarrow{\Gamma(\alpha)} \Gamma(M) \xrightarrow{\Gamma(\beta)}  \Gamma(L) \rightarrow 0,
\end{equation} is exact.

From the sujectivity of $\beta$ and $\Gamma(\beta)\Gamma({\alpha})=0$, we  see that
$\Gamma(\beta)$ is surjective and
 $\text{Im}\Gamma(\alpha)\subset \text{Ker}\Gamma(\beta)$.
We need to show that $\Gamma(\alpha)$ is injective and $\text{Ker}\Gamma(\beta)\subset\text{Im}\Gamma(\alpha) $.

For $n+(d_{-1}-1)N\in \text{Ker}\Gamma(\alpha)$, there exists some $m\in M$ such that  $\alpha(n)=(d_{-1}-1)m$. Then $(d_{-1}-1)\beta(m)=\beta\alpha(n)=0$. Since $L$ is a weight module, $d_{-1}-1$ acts injectively on $L$. Thus $\beta(m)=0$ and there is an $n_1\in N$ such that $m=\alpha(n_1)$. Consequently,
$\alpha(n)=(d_{-1}-1)\alpha(n_1)$. Then the injectivity of $\alpha$ implies that $n=(d_{-1}-1)n_1$, i.e., $n\in (d_{-1}-1)N$. So $\Gamma(\alpha)$ is injective.

For $m+(d_{-1}-1)M\in\text{Ker}\Gamma(\beta)$, we have that $\beta(m)=(d_{-1}-1)l$ for some $l\in L$. As $\beta$ is surjective, $\beta(m)=(d_{-1}-1)\beta(m')$ for some $m'\in M$, i.e.,
$m-(d_{-1}-1)m'\in \text{ker}\beta=\text{Im}\alpha$. So $m-(d_{-1}-1)m'=\alpha(n)$ for some $n\in N$, hence $\text{Ker}\Gamma(\beta)\subset\text{Im}\Gamma(\alpha)$.
Therefore $\Gamma: \mathcal{O}\rightarrow \Omega$ is exact.
\end{proof}

%\begin{corollary}
%If $\lambda\not\in \Z$, then for any $M\in \mathcal{O}_{[\lambda]}$, $\dim \Gamma_a(M)$ is equal to the composition length of $M$.
%\end{corollary}

%\begin{theorem}If $\lambda\neq 0$, then $$\emph{Hom}_{H_a}(\Gamma(M), \Gamma(N))\cong \emph{Hom}_{\W}(M,N), $$ for any $M,N\in \mathcal{O}_{[\lambda]}$. So $\mathcal{O}_{[\lambda]}$  isomorphic to some subcategory of $\Omega_a$ when $\lambda\neq 0$.
%\end{theorem}
%\begin{proof}
%\end{proof}

Denote the restriction of $\Gamma$ to $\mathcal{O}_{[\lambda]}$ by $\Gamma^{[\lambda]}$, and by $\Omega_{ [\lambda]}$ the subcategory of
$\Omega$ consisting  of the $H_1$-modules isomorphic to $\Gamma^{[\lambda]}(M)$ for $M\in \mathcal{O}_{[\lambda]}$. Finally we give a conjecture on  $\mathcal{O}_{[\lambda]}$.

\begin{conjecture}There is a $\lambda \not\in \Z$ such that $\mathcal{O}_{[\lambda]}$  is equivalent  to  $\Omega_{[\lambda]}$.
\end{conjecture}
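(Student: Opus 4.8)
The plan is to prove the sharper statement that the restricted functor $\Gamma^{[\lambda]}$ itself is the desired equivalence, for every $\lambda\in\C$ with $\lambda\notin\frac12\Z$ and outside the finitely many residue classes that produce the exceptional $\sqrt{19}$-extensions. First I would reinterpret $\Gamma$ as the bimodule functor $\Gamma(M)\cong Q'_1\otimes_{U(\W)}M$, using $Q'_1\cong U(\W)/(d_{-1}-1)U(\W)$ and $H_1\cong \hom_{\W}(Q'_1,Q'_1)$; this realizes $\Gamma$ as a left adjoint with right adjoint $\hom_{H_1}(Q'_1,-)$, a structure I will need for fullness. Restricting to a non-integral $\lambda$ is essential: by Lemma \ref{image} every composition factor $L(\lambda+n)$ of an object of $\mathcal{O}_{[\lambda]}$ satisfies $\dim\Gamma(L(\lambda+n))=1$, so the exactness of $\Gamma$ together with the finite composition series of Corollary \ref{finite} gives $\dim_\C\Gamma(M)=\ell(M)$, the composition length, for every $M\in\mathcal{O}_{[\lambda]}$.

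Faithfulness is then immediate: if $0\neq f\colon M\to N$ then $\Im f\neq0$ has length $\geq 1$, hence $\dim\Gamma(\Im f)\geq 1$, and since $\Gamma$ is exact $\Im\Gamma(f)=\Gamma(\Im f)\neq0$; thus $\Gamma^{[\lambda]}$ is faithful (and, being exact and faithful, reflects exactness). To identify the simple objects on the $H_1$-side I would track the Casimir $z=-d_1d_{-1}+d_0^2-d_0$: since $[z,d_{-1}]=0$, left multiplication by $z$ descends to $Q'_1$, and reducing modulo $(d_{-1}-1)U(\W)$ shows that $z$ corresponds to $\zeta:=d_0^2+d_0-d_1\in H_1$, which acts on $\Gamma(L(\mu))$ by the scalar $\mu(\mu+1)$. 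For $\lambda\notin\frac12\Z$ the scalars $(\lambda+n)(\lambda+n+1)$, $n\in\Z$, are pairwise distinct, so the one-dimensional modules $\Gamma(L(\lambda+n))$ are pairwise non-isomorphic and exhaust the composition factors occurring in $\Omega_{[\lambda]}$; hence $\Gamma^{[\lambda]}$ induces a bijection on isomorphism classes of simples.

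The heart of the argument is fullness. I would first compute $\Ext^1_{H_1}(\Gamma L(\mu),\Gamma L(\nu))$ and compare it, via the exact functor $\Gamma$, with $\Ext^1_{\mathcal{O}}(L(\mu),L(\nu))$ given in Theorem \ref{the}. Exactness makes the natural comparison map well defined and injective on non-split extensions; showing it is an isomorphism for every pair reproduces the Ext-quiver of $\mathcal{O}_{[\lambda]}$ on the $H_1$-side. Granting this, I would prove that $\hom_{\mathcal{O}}(M,N)\to\hom_{H_1}(\Gamma M,\Gamma N)$ is bijective by induction on $\ell(M)+\ell(N)$: present $M$ (or $N$) by a short exact sequence with a simple term and a shorter term, apply $\Gamma$, and conclude by the five lemma from the $\hom$- and $\Ext^1$-comparisons on the pieces. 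Together with faithfulness this gives that $\Gamma^{[\lambda]}$ is fully faithful; essential surjectivity onto $\Omega_{[\lambda]}$ holds by the definition of $\Omega_{[\lambda]}$, and full faithfulness forces $\Omega_{[\lambda]}$ to be an abelian subcategory of $\Omega$ closed under subquotients. Hence $\Gamma^{[\lambda]}\colon\mathcal{O}_{[\lambda]}\to\Omega_{[\lambda]}$ is an equivalence.

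The main obstacle I anticipate is precisely this fullness step, and it is sharpened by the wildness of $\mathcal{O}_{[\lambda]}$ (Theorem \ref{wild1}): because the Ext-arrows only raise the weight (by $2,3,4$), projective covers of the $L(\lambda+n)$ have infinite length and lie outside $\mathcal{O}$, so there is no finite-dimensional progenerator whose endomorphism algebra I could compare directly. I therefore expect the viable route to be the adjunction set up in the first paragraph: prove a Whittaker analogue of the Kostant–Backelin theorem, namely that the adjunction unit $M\to\hom_{H_1}(Q'_1,\Gamma M)$ is an isomorphism for $M\in\mathcal{O}_{[\lambda]}$. This in turn rests on an explicit description of $H_1$ as a subalgebra of $U(\mb)$ — its generators, relations, and finite-dimensional simple modules — and on verifying that $\Gamma$ preserves first extensions exactly for the generic residue classes $\lambda\notin\frac12\Z$ avoiding the exceptional values $\frac{5\pm\sqrt{19}}{2}+\Z$; controlling these two points is where the real work lies.
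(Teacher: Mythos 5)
First, be aware that the statement you are proving is left \emph{open} in the paper: it is stated as a conjecture and the authors give no proof, so the only question is whether your argument closes it on its own. It does not; what you have written is a plan whose decisive step, fullness of $\Gamma^{[\lambda]}$, is explicitly deferred (``granting this'', ``where the real work lies''). The parts you do carry out are correct and worthwhile: the identification $\Gamma(M)\cong Q'_1\otimes_{U(\W)}M$, the count $\dim_{\C}\Gamma(M)=\ell(M)$ on a non-integral block (exactness plus Lemma \ref{image}), faithfulness of $\Gamma^{[\lambda]}$, and the observation that $\zeta:=d_0^2+d_0-d_1$ lies in $H_1$ and realizes the Casimir, acting on $\Gamma(L(\mu))$ by $\mu(\mu+1)$ (indeed $z=-(d_{-1}-1)d_1+\zeta$). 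But the bridge from these facts to fullness breaks exactly where you wave at it: the claim that exactness makes the comparison map $\Ext^1_{\mathcal{O}}(L(\mu),L(\nu))\to\Ext^1_{H_1}(\Gamma L(\mu),\Gamma L(\nu))$ ``injective on non-split extensions'' is unjustified, and is in substance the statement to be proved. An exact faithful functor can perfectly well send a non-split extension to a split one (the forgetful functor to vector spaces does), and if $\Gamma$ does so even once, fullness is already dead: for a non-split $0\to A\to E\to B\to 0$ with $A\not\cong B$ simple one has $\hom_{\mathcal{O}}(E,A)=0$, while a splitting of $\Gamma E$ yields a nonzero projection in $\hom_{H_1}(\Gamma E,\Gamma A)$, so $\hom_{\mathcal{O}}(E,A)\to\hom_{H_1}(\Gamma E,\Gamma A)$ cannot be surjective. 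Your five-lemma induction therefore rests on a hypothesis equivalent to its conclusion.

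Second, your genericity prediction ($\lambda\notin\tfrac12\Z$, avoiding the $\sqrt{19}$ classes) is not only unsupported but is likely backwards, and your own device shows why. Take the non-split extension $0\to\Delta(\lambda)\to E\to\Delta(\lambda-2)\to0$ in $\mathcal{O}_{[\lambda]}$, the dual of Feigin--Fuchs case (3). Since the $d_{-1}$-cocycle there vanishes, $E$ is free of rank $2$ over $\C[d_{-1}]$, generated by the highest weight vector $\phi$ of the submodule and a lift $w$ of the highest weight vector of the quotient; a direct computation (using $d_1w=-4\phi_1$ and $\phi_1\equiv-\phi \bmod (d_{-1}-1)E$) gives the action of $\zeta$ on $\Gamma(E)$ in the basis $(\bar\phi,\bar w)$:
\[
\zeta\ \mapsto\ \begin{pmatrix}\lambda(\lambda+1) & -4\\ 0 & (\lambda-1)(\lambda-2)\end{pmatrix}.
\]
This matrix is diagonalizable whenever $\lambda(\lambda+1)\neq(\lambda-1)(\lambda-2)$, i.e.\ for every $\lambda\neq\tfrac12$. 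So on all of your ``generic'' blocks the restriction of $\Gamma(E)$ to $\C[\zeta]$ is already split: the very eigenvalue separation you invoke to distinguish the simples $\Gamma L(\lambda+n)$ is the mechanism that splits the extensions. Whether $\Gamma(E)$ is nonetheless non-split over $H_1$ depends entirely on elements of $H_1$ other than $\zeta$ failing to preserve the $\zeta$-eigenlines; in particular, if $\zeta$ is central in $H_1$ (the Kostant-type answer one would first guess for the endomorphism algebra of a universal Whittaker module), then $\Gamma$ kills this extension for every $\lambda\neq\tfrac12$, $\Gamma^{[\lambda]}$ is full for no block at all, and any equivalence as in the conjecture would have to come from a different functor entirely. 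The missing ingredient is therefore not a formal argument but hard information about $H_1$ --- generators, relations, its centre, and the $H_1$-module structure of $\Gamma$ applied to each Feigin--Fuchs extension. That is the actual content of the conjecture, and nothing in your proposal (nor in the paper) supplies it.
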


\section{Tensor $\W$-modules from $\mb$-modules and Weyl modules}

Let $\cd$ be the  Weyl algebra of rank one, that is, $\cd$  is the associative algebra
over $\mathbb{C}$ generated by $x$,
$\partial$ subject to
the relation $[\partial,x]=1$. In this section, we construct simple $\W$-modules from $\cd$-modules and $\mb$-modules.

\subsection{Tensor module $T(P,V)$}

The following interesting algebra homomorphism was given   in \cite{XL1} and \cite{BIN} independently,  which plays an important role in the classification of simple weight modules for $\W_n$,  see \cite{XL1, GS}.

\begin{lemma}\label{hom} There is an algebra monomorphism $\phi$ from $U(\W)$ to
$\cd\otimes U(\mathfrak{b})$ such that
$$\aligned
d_{-1}&\mapsto \partial\otimes 1, \\
d_m &\mapsto x^{m+1}\partial\otimes 1+ \sum_{r=0}^{m}\binom{m+1}{r+1} x^{m-r} \otimes d_{r},\ m\in \Z_{\geq 0}.\endaligned$$
\end{lemma}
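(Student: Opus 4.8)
The plan is to obtain $\phi$ from the universal property of $U(\W)$: since $\cd\otimes U(\mb)$ is an associative algebra, and hence a Lie algebra under the commutator, it suffices to check that the assignment on the generators $\{d_i\mid i\geq -1\}$ respects the brackets $[d_i,d_j]=(j-i)d_{i+j}$, after which $\phi$ extends uniquely and multiplicatively to $U(\W)$. I would write $\phi(d_m)=A_m+B_m$ with $A_m=x^{m+1}\partial\otimes 1$ and $B_m=\sum_{r=0}^{m}\binom{m+1}{r+1}x^{m-r}\otimes d_r$ (so $A_{-1}=\partial\otimes 1$, $B_{-1}=0$), and split the verification into the cases $i=j=-1$ (trivial), $i=-1,\,j=m\geq 0$, and $i=m,\,j=n\geq 0$. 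In the mixed case $[\phi(d_{-1}),\phi(d_m)]$, the only input is $[\partial,x^{k}]=kx^{k-1}$ in $\cd$, and comparison with $(m+1)\phi(d_{m-1})$ collapses to the elementary identity $\binom{m+1}{r+1}(m-r)=(m+1)\binom{m}{r+1}$.

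The substantial case is $m,n\geq 0$, where I would expand $[\phi(d_m),\phi(d_n)]=[A_m,A_n]+[A_m,B_n]+[B_m,A_n]+[B_m,B_n]$. The term $[A_m,A_n]=(n-m)x^{m+n+1}\partial\otimes 1$ is exactly the Witt relation inside $\cd$ and supplies the $\cd\otimes 1$ component of $(n-m)\phi(d_{m+n})$. Using $[x^{a}\partial,x^{b}]=bx^{a+b-1}$ together with $[x^{a}\otimes d_r,x^{b}\otimes d_s]=x^{a+b}\otimes[d_r,d_s]=(s-r)x^{a+b}\otimes d_{r+s}$, the other three brackets collect, for each $t$, into the coefficient of $x^{m+n-t}\otimes d_t$. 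Matching it against the coefficient $(n-m)\binom{m+n+1}{t+1}$ produced by $(n-m)\phi(d_{m+n})$ reduces the whole homomorphism property to the single family of binomial identities
\begin{equation*}
\binom{n+1}{t+1}(n-t)-\binom{m+1}{t+1}(m-t)+\sum_{r=0}^{t}\binom{m+1}{r+1}\binom{n+1}{t-r+1}(t-2r)=(n-m)\binom{m+n+1}{t+1},
\end{equation*}
valid for all $m,n\geq 0$ and $0\leq t\leq m+n$, with binomials understood to vanish outside their range.

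I expect this identity to be the main obstacle. I would prove it by a generating-function computation, packaging the row $\big(\binom{m+1}{r+1}\big)_r$ into $(1+y)^{m+1}-1$ so that the convolution on the left becomes a Chu--Vandermonde summation, or alternatively by induction on $t$ via Pascal's rule; the symmetric check $m=n$ (left side antisymmetric under $r\mapsto t-r$, hence zero) and the cases $t=0,1$ provide useful sanity tests. The conceptual reason the identity must hold is that $\phi$ is the infinitesimal form of the Lie-derivative action of vector fields on sections, the binomials being the Taylor coefficients of $x^{m+1}$; this explains the formula, but a self-contained proof still passes through the identity.

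Finally, for injectivity I would pass to associated graded algebras. Filter $\cd$ by order in $\partial$ and $U(\mb)$ by Poincar\'e--Birkhoff--Witt degree, and give $\cd\otimes U(\mb)$ the total filtration in which $\partial$ and each $d_r$ have degree $1$ while $x$ has degree $0$; then $\mathrm{gr}(\cd\otimes U(\mb))\cong\C[x,\xi,\bar d_0,\bar d_1,\dots]$ is a commutative polynomial ring, $\xi$ being the symbol of $\partial$. Each $\phi(d_i)$ is homogeneous of degree $1$ for this filtration, so $\phi$ induces $\mathrm{gr}(\phi)\colon S(\W)=\C[\bar d_{-1},\bar d_0,\dots]\to\C[x,\xi,\bar d_0,\dots]$ with $\bar d_{-1}\mapsto\xi$ and $\bar d_m\mapsto \bar d_m+\big(\text{a polynomial in }x,\xi,\bar d_0,\dots,\bar d_{m-1}\big)$, since the top coefficient $\binom{m+1}{m+1}$ equals $1$. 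Each image is a $\C[x]$-linear form in $\xi,\bar d_0,\bar d_1,\dots$, and the triangular shape (diagonal coefficients $1$) makes these forms $\C(x)$-linearly independent; linearly independent linear forms are algebraically independent, so $\mathrm{gr}(\phi)$ is injective, and therefore so is $\phi$.
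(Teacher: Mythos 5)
Your proof is correct, but there is nothing in the paper to compare it against: the paper states this lemma as a known result, quoted from \cite{XL1} and \cite{BIN}, and supplies no proof whatsoever. Your argument is therefore a genuinely different (namely, self-contained) route. Your reduction of the bracket-preservation to the displayed family of binomial identities is set up correctly: extracting the coefficient of $x^{m+n-t}\otimes d_t$ from $[A_m,B_n]$, $[B_m,A_n]$ and $[B_m,B_n]$ gives exactly your three left-hand terms. The identity itself does hold, and along precisely the Chu--Vandermonde line you propose: writing $t-2r=(t-r+1)-(r+1)$ and using $(r+1)\binom{m+1}{r+1}=(m+1)\binom{m}{r}$ and $(t-r+1)\binom{n+1}{t-r+1}=(n+1)\binom{n}{t-r}$, two applications of Vandermonde evaluate the convolution as $(n-m)\binom{m+n+1}{t+1}-(n+1)\binom{n}{t+1}+(m+1)\binom{m}{t+1}$, and the last two terms cancel your first two via $(n-t)\binom{n+1}{t+1}=(n+1)\binom{n}{t+1}$ and $(m-t)\binom{m+1}{t+1}=(m+1)\binom{m}{t+1}$; carrying out this short computation (rather than leaving it as a plan) is the only thing missing from your write-up. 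Your injectivity argument via associated graded algebras is also correct and is the cleanest part: $\phi$ is filtered, $\mathrm{gr}(\cd\otimes U(\mb))\cong\C[x,\xi,\bar d_0,\bar d_1,\dots]$ is a polynomial ring, the symbols $\xi$ and $\bar d_m+(\text{terms in }\xi,\bar d_0,\dots,\bar d_{m-1}\text{ with coefficients in }x\C[x])$ form a triangular, unit-diagonal family of $\C(x)$-linearly independent linear forms, hence are algebraically independent over $\C$, so $\mathrm{gr}(\phi)$ and therefore $\phi$ is injective. What each approach buys: the paper's citation keeps its Section 4 short and defers the computation to \cite{XL1,BIN}; your proof makes the result self-contained, explains where the coefficients $\binom{m+1}{r+1}$ come from (they are forced by Vandermonde), and gives an injectivity argument by symbols that is robust enough to generalize to the higher-rank analogues used in those references.
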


By Lemma \ref{hom}, for any $\cd$-module $P$, any $\mb$-module $V$,
the tensor product $P\otimes V$ can be made to be a $\W$-module denoted by $T(P,V)$.

 \begin{remark} Let $S=\C[x^{\pm 1}]/\C[x]$ which is a simple
 $\cd$-module. Then we have a functor $T(S,-)$ from the category $\mb\text{-mod}$ of finite
 dimensional $\mb$-modules to the category $\mathcal{O}$. We originally intended to use the functor $T(S,-)$  to study $\mathcal{O}$. However,
 in view of extensions between simple modules in $\mathcal{O}$, $T(S,V)$
 may be decomposable for some indecomposable $\mb$-module $V$. Nevertheless,
 we can still use the bifunctor $T(-,-)$ to construct simple $\W$-modules.
 \end{remark}

 In the following theorem, we will give the simplicity of $T(P,V)$ under some natural conditions.

\begin{theorem}\label{simple} Suppose that $P$ is a simple $\cd$-module, and $V$ is a simple $\mb$-module such that there is an $l\in \Z_{>0}$ satisfying
\begin{enumerate}[$($1$)$]
\item $d_l$ acts injectively on $V$;
\item $d_i
V=0$ for any $i>l$,
\end{enumerate}
then $T(P,V)$ is a simple $\W$-module.
\end{theorem}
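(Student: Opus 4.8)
The plan is to show that any nonzero $\W$-submodule $W$ of $T(P,V)=P\otimes V$ must be all of $P\otimes V$. The underlying space is $P\otimes V$, and the $\W$-action is given through the homomorphism $\phi$ of Lemma \ref{hom}: $d_{-1}$ acts as $\partial\otimes 1$, and for $m\geq 0$, $d_m$ acts as $x^{m+1}\partial\otimes 1 + \sum_{r=0}^m \binom{m+1}{r+1}x^{m-r}\otimes d_r$. The key structural feature I would exploit is the filtration by ``order in $V$'': hypotheses (1) and (2) say that $d_l$ acts injectively on $V$ while $d_i V=0$ for $i>l$, so $d_l$ is the \emph{top} operator among the $\W^+$-actions on $V$. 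First I would extract, from the expansion of $d_m$, the component that acts purely on the $V$-factor with the highest index. For $m=l$, the sum $\sum_{r=0}^l \binom{l+1}{r+1}x^{l-r}\otimes d_r$ contains the term $\binom{l+1}{l+1}x^0\otimes d_l = 1\otimes d_l$, and this is the only term in which $d_l$ appears; all other $d_m$ for $0\le m<l$ involve only $d_r$ with $r<l$, and for $m>l$ the $d_l$-term is multiplied by a positive power of $x$ rather than sitting in the constant-in-$x$ slot.

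The heart of the argument is to produce, inside any nonzero submodule $W$, a nonzero element of the pure tensor form $p\otimes v$ and then to separate the two tensor factors. First I would use the $\cd$-action coming from $d_{-1}=\partial\otimes 1$ together with suitable combinations of the $d_m$ to reach an element whose $V$-component is a single vector. Concretely, I would take a nonzero $w\in W$ and act by a high-order operator built from $d_l$ so that, using $d_iV=0$ for $i>l$ and the injectivity of $d_l$ on $V$, the $V$-degrees collapse; the top operator $1\otimes d_l$ is injective on the $V$-factor, which lets me isolate terms. Once I have a nonzero $p\otimes v\in W$, the simplicity of $P$ as a $\cd$-module lets me act by $\cd=\langle x,\partial\rangle$ to sweep out all of $P\otimes v$: since $d_{-1}=\partial\otimes 1$ lies in $\W$, I get $\partial$ for free, and I would recover the multiplication-by-$x$ operator by comparing $d_m$ for different $m$ (e.g.\ taking a difference of $d_m$ and $x\cdot d_{m-1}$-type combinations) to manufacture $x\otimes(\text{something in }U(\b))$ and thereby the Weyl-algebra action on the $P$-factor while keeping the $V$-factor fixed. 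This shows $P\otimes v\subset W$.

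Finally, with $P\otimes v\subset W$ for some nonzero $v\in V$, I would use the $\W^+$-action to move $v$ around inside $V$: applying $1\otimes d_r$-type pieces (the $x^0$ components of the $d_m$) and using that $V$ is a simple $\mb$-module generated by any nonzero vector under $U(\b)=U(\mh\oplus\W^+)$, I obtain $P\otimes V'\subset W$ for the $\b$-submodule $V'=U(\b)v$, which equals $V$ by simplicity. Hence $W=P\otimes V=T(P,V)$, so $T(P,V)$ is simple. The main obstacle I anticipate is the bookkeeping in the first step: because each $d_m$ mixes the $\cd$-action and the $U(\b)$-action across several tensor slots, isolating a clean pure tensor $p\otimes v$ requires carefully choosing operators so that the unwanted cross terms (those with lower $V$-degree, or with extra powers of $x$) are killed using hypotheses (1) and (2). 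The injectivity of $d_l$ on $V$ and the vanishing $d_iV=0$ for $i>l$ are exactly what make this separation possible, and getting the combinatorics of the binomial coefficients to cooperate is the delicate point; the two subsequent steps, exploiting simplicity of $P$ over $\cd$ and of $V$ over $\b$, are then comparatively routine.
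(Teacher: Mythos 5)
Your overall skeleton (separate the tensor factors, then use simplicity of $P$ and of $V$) resembles the paper's, and your last step is essentially the paper's concluding argument; but your first step, which you yourself flag as the delicate point, contains a genuine gap, and it is not mere bookkeeping. You propose to take a nonzero $w=\sum_{n=0}^{q}p_n\otimes v_n$ in a submodule $N$ and collapse it to a \emph{pure tensor} $p\otimes v$ by acting with ``a high-order operator built from $d_l$,'' using injectivity of $d_l$ on $V$ and $d_iV=0$ for $i>l$. This mechanism cannot work. Every operator you can reach lies in $\phi(U(\W))\subset \cd\otimes U(\mb)$, hence has the form $\sum_j X_j\otimes u_j$, and applying it to $w$ again produces a sum over $n$ of the same length; in fact injectivity of $d_l$ works \emph{against} you here, since an injective operator on $V$ preserves linear independence of the $v_n$, so the ``$V$-degrees'' never collapse (and applying $d_i$ with $i>l$ just gives $0$). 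The only way to shorten such a sum is to choose $X\in\cd$ with $Xp_n=\delta_{n,n_0}p$, and that is precisely the Jacobson density theorem, i.e.\ the simplicity of $P$ --- which your plan invokes only \emph{after} the pure tensor has been produced. The paper runs the logic in the opposite (correct) order: it first shows that $\sum_n Xp_n\otimes d_l^2v_n\in N$ for every $X\in\cd$, and only then applies density to extract pure tensors $p\otimes d_l^2v_{n_0}$. Moreover, reaching that first statement needs an idea absent from your sketch: the operator $1\otimes d_l$ is not directly available ($\phi(d_l)$ equals $1\otimes d_l$ plus the terms $x^{l+1}\partial\otimes 1+\sum_{r=0}^{l-1}\binom{l+1}{r+1}x^{l-r}\otimes d_r$), so the paper instead considers the quadratic elements $d_{k-m}d_m$ with $k\geq 2l$, observes that $d_{k-m}d_m(p\otimes v)$ is a polynomial in $m$ of degree $2l+2$ whose leading coefficient is $x^{k-2l}p\otimes d_l^2v/((l+1)!)^2$ (this is exactly where $d_iV=0$ for $i>l$ enters), and evaluates at the $2l+3$ points $m=-1,0,\dots,2l+1$, inverting a Vandermonde matrix to isolate that coefficient.

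Your second step has a related flaw: to sweep out $P\otimes v$ from a single pure tensor you propose to ``recover the multiplication-by-$x$ operator'' via differences of $d_m$ and ``$x\cdot d_{m-1}$-type combinations'' acting on $P$ while keeping the $V$-factor fixed. This is circular --- multiplication by $x$ on the first factor alone is not an element of $\phi(U(\W))$, and the combination you invoke presupposes the very operator you are trying to build --- and it is also unnecessary. The paper never isolates such an operator: it establishes the full fiber $P\otimes d_l^2v_n\subset N$ (all of $P$ at once, via density) \emph{before} moving the $V$-component. Once $P\otimes v\subset N$ holds for all $p$ simultaneously, the cross term $d_mp\otimes v$ in $d_m(p\otimes v)=d_mp\otimes v+\sum_{r=0}^{m}\binom{m+1}{r+1}x^{m-r}p\otimes d_rv$ is already known to lie in $N$ and can be subtracted, giving $p\otimes d_rv\in N$ inductively for $r\leq l$; hence $V_1=\{v\in V\mid P\otimes v\subset N\}$ is a nonzero $\mb$-submodule and simplicity of $V$ finishes. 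So the two repairs your proposal needs are: (i) the leading-coefficient/Vandermonde extraction from $d_{k-m}d_m$, and (ii) reversing the order of quantifiers --- density (simplicity of $P$) is the tool that \emph{creates} pure tensors, and one must obtain whole fibers $P\otimes v$, not single pure tensors, before invoking simplicity of $V$.
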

\begin{proof} Let $N$ be a nonzero submodule of $T(P,V)$. Suppose that $u=\sum_{n=0}^q  p_n\otimes v_n$ is a nonzero element in $N$, where $p_0,\dots, p_q$ are linearly independent.

\

\noindent{\bf Claim 1.} \label{claim1} For any $X\in\cd$,  we have that $\sum_{n=0}^q  X p_n\otimes d_l^2v_n\in N$.

For any $k$ with $k\geq 2l$ and any $m$ with $-1\leq m \leq 2l+1$, we can compute that
$$\aligned & d_{k-m}d_m(p\otimes v)=d_{k-m}\Big(d_mp\otimes v+ \sum_{r=0}^{m}\binom{m+1}{r+1} x^{m-r}p \otimes d_{r}v\Big)\\
=\ & ( d_{k-m}d_mp)\otimes v+ \sum_{r=0}^{m}\binom{m+1}{r+1} (d_{k-m}x^{m-r}p) \otimes d_{r}v\\
&+\sum_{s=0}^{k-m}\binom{k-m+1}{s+1}x^{k-m-s}d_mp\otimes d_sv\\
&+\sum_{s=0}^{k-m}\sum_{r=0}^{m}\binom{k-m+1}{s+1}\binom{m+1}{r+1} x^{k-r-s}p \otimes d_sd_{r}v\\
=\ & m^{2l+2}x^{k-2l}p\otimes\frac{d_l^2v}{((l+1)!)^2} +g(m),
\endaligned$$ where $g(m)$ is the term with degree of $m$ smaller than $2l+2$.  Consider the coefficient of  $m^{2l+2}$ in $d_{k-m}d_m u$. By letting $m=-1,0, 1, \dots, 2l+1$,  using the Vandermonde matrix, we deduce that $\sum_{n=0}^q  x^i p_n\otimes d_l^2v_n\in N$, for all $ i\in \Z_{\geq 0}$.
From the action of $d_{-1}$ on $N$, we see that  $\sum_{n=0}^q  \partial^j x^i p_n\otimes d_l^2v_n\in N$ for all $ i, j \in \Z_{\geq 0}$.
So $\sum_{n=0}^q  X p_n\otimes d_l^2v_n\in N$ for any $X\in\cd$. Then Claim 1 follows.

\

\noindent{\bf Claim 2.} \label{claim2}$P\otimes d_l^2 v_n\subset N$, for any $0\leq n\leq q$.

Since $P$ is a simple $\cd$-module, by the Jacobson density theorem, for any $p\in P$, there is a $X_n$ such that
$$X_n p_i=\delta_{n,i}p,\ \ i=0,\dots, q.$$
By Claim 1, we obtain that $P\otimes d_l^2v_n\subset N$, for any $0\leq n\leq q$. Claim 2 follows.

\

\noindent{\bf Claim 3.} $N=T(P,V)$. Hence $T(P,V)$ is simple.

Let $V_1=\{v\in V\mid P\otimes v\subset N\}$. By Claim 2 and that $d_{l}$ acts injectively on $V$, $V_1\neq 0$. For any $v\in V_1, p\in P$, taking $m=0,1\dots, l$ in
$$d_m(p\otimes v)=d_mp\otimes v+ \sum_{r=0}^{m}\binom{m+1}{r+1} x^{m-r}p \otimes d_{r}v,$$
we can see that $p\otimes d_0v, p\otimes d_1v, \dots, p\otimes d_l v\in N$. So $V_1$ is a nonzero $\mb$-submodule of $V$. The simplicity of $V$ forces that $V_1=V$. Then Claim 3 follows.

\end{proof}

\subsection{Isomorphism criterion for $T(P,V)$}

Next we  give  the following isomorphism criterion for $T(P,V)$.

\begin{theorem} Suppose that $P, P'$ are simple $\cd$-modules, $V, V'$ are simple $\mb$-modules  such that there are $l, s\in \Z_{> 0}$ satisfying
$d_l$ (resp. $d_s$) acts injectively on $V$ (resp. $V'$) and $d_i V=0$ (resp. $d_i V'=0$) for any $i>l$ (resp. $i>s$). Then $T(P, V)\cong T(P' , V')$ if and only if $P\cong P', l=s$ and $V\cong V'$.
\end{theorem}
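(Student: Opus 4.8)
The \emph{if} direction is immediate: an isomorphism $P\cong P'$ of $\cd$-modules together with an isomorphism $V\cong V'$ of $\mb$-modules induces, through the homomorphism $\phi$ of Lemma \ref{hom}, an isomorphism $T(P,V)\cong T(P',V')$ (and in particular forces $l=s$). So the content is the \emph{only if} direction. Assume $\Theta\colon T(P,V)\to T(P',V')$ is an isomorphism of $\W$-modules. My plan is to reconstruct, purely from the $\W$-action, first the integer $l$, then the operators $\cd\otimes\mathrm{id}$, then the $\cd$-module $P$, and finally the $\mb$-module $V$; at each stage $\Theta$ is forced to respect the reconstructed data.

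First I recover $l$. Fix $k\ge 2l$. The computation in the proof of Theorem \ref{simple} shows that $d_{k-m}d_m(p\otimes v)$ is the value at the integer $m$ of an $M$-valued polynomial $\mathbf{p}(m)$ whose coefficient of $m^{2l+2}$ is a nonzero scalar multiple of $x^{k-2l}p\otimes d_l^2v$, all higher coefficients vanishing. Since the Weyl algebra is a simple ring, $x^{n}P\neq0$ for every nonzero $\cd$-module, so $x^{k-2l}p\neq0$ for suitable $p$, and $d_l^2v\neq0$ because $d_l$ is injective; hence $2l+2$ is exactly the maximal degree in $m$ of these polynomials, taken over all elements of $T(P,V)$. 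This maximal degree is an isomorphism invariant: $\Theta$ commutes with every $d_j$, hence carries the polynomial family for $T(P,V)$ to that for $T(P',V')$ and, being injective, preserves degrees. Comparing with the analogous quantity $2s+2$ for $T(P',V')$ gives $l=s$.

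Next I recover $\cd\otimes\mathrm{id}$. The coefficient of $m^{2l+2}$ defines an intrinsic operator $\pi_k$ on $T(P,V)$ with $\pi_k(p\otimes v)=c\,x^{k-2l}p\otimes d_l^2v$, where $c\neq0$ depends only on $l$. The key point is that $d_l^2$ is a \emph{bijection} of $V$: since $[d_j,d_l]=(l-j)d_{l+j}$ acts as $0$ on $V$ for $j\ge1$ and $d_0d_l^2=d_l^2(d_0+2l)$ on $V$, the subspace $d_l^2V$ is a nonzero $\mb$-submodule, hence equals $V$ by simplicity; with injectivity of $d_l$ this makes $d_l^2$ invertible. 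Therefore $\pi_{2l}=c(\mathrm{id}_P\otimes d_l^2)$ is invertible and $x^{n}\otimes\mathrm{id}=c^{-1}\pi_{2l+n}\,\pi_{2l}^{-1}$ for all $n\ge0$. As $\pi_k$ is built from the $d_j$, these operators are intrinsic and $\Theta$ intertwines them; together with $d_{-1}=\partial\otimes\mathrm{id}$ this shows $\Theta$ is $\cd$-linear for the action of $\cd$ on the first factor. As a $\cd$-module $T(P,V)\cong P^{(\dim V)}$ is $P$-isotypic, and $\mathrm{End}_{\cd}(P)=\C$ by Schur's lemma for simple modules over the Weyl algebra; hence the nonzero $\cd$-homomorphism $\Theta$ between the $P$-isotypic $T(P,V)$ and the $P'$-isotypic $T(P',V')$ forces $P\cong P'$, and after identifying $P=P'$ every such isomorphism has the form $\Theta=\mathrm{id}_P\otimes f$ for a linear bijection $f\colon V\to V'$.

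Finally I upgrade $f$ to a $\mb$-isomorphism, which is where the genuine separation of the two factors happens. Writing $\Theta d_m=d_m\Theta$ with $\Theta=\mathrm{id}_P\otimes f$ and the formula for $d_m$, the terms $x^{m+1}\partial\otimes f$ cancel and one is left, for every $m\ge0$, with $\sum_{r=0}^{\min(m,l)}\binom{m+1}{r+1}(x^{m-r}\otimes\mathrm{id})(\mathrm{id}\otimes g_r)=0$, where $g_r:=fd_r-d_rf$ (the first $d_r$ acting on $V$, the second on $V'$). The top term $r=m$ carries the factor $\binom{m+1}{m+1}x^0=\mathrm{id}_P$, so an induction on $m=0,1,\dots,l$ yields $g_m=0$ at each step; for $r>l$ one has $d_rV=0=d_rV'$ (using $l=s$), so $g_r=0$ trivially. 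Thus $f$ commutes with every $d_r$, i.e.\ $f$ is a $\mb$-module isomorphism $V\cong V'$. The main obstacle throughout is precisely this disentangling of the $\cd$- and $\mb$-actions inside a single abstract isomorphism; it is overcome by two rigidity features of the leading data—the bijectivity of $d_l^2$ on $V$, which lets the top coefficient $\pi_{2l}$ recover $\cd\otimes\mathrm{id}$ cleanly, and the unitriangular shape of the binomial coefficients in $d_m$, which lets the commutators $g_r$ be peeled off one index at a time.
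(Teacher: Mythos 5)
Your proof is correct, and it shares the paper's opening and closing steps while replacing the middle by a genuinely different, more structural argument. Like the paper, you obtain $l=s$ by comparing the top degree in $m$ of the polynomials $m\mapsto d_{k-m}d_m u$ (the Claim 1 computation in the proof of Theorem \ref{simple}), and your final unitriangular induction showing $g_m=0$ for $m=0,1,\dots,l$ is the same mechanism the paper uses when it promotes $\psi(p\otimes v)=\psi_1(p)\otimes v'$ to $\psi(p\otimes yv)=\psi_1(p)\otimes yv'$ for $y\in U(\mb)$. The middle portions differ: the paper fixes a single pure tensor $p\otimes v$, writes its image as $\sum_n p'_n\otimes v'_n$, uses the Jacobson density theorem to find $Y\in\cd$ collapsing that image to a pure tensor, and then identifies $P\cong P'$ and $V\cong V'$ by comparing annihilators ($P\cong\cd/\mathrm{Ann}_{\cd}(p)$, $V\cong U(\mb)/\mathrm{Ann}_{U(\mb)}(v)$). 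You instead observe that $d_lV$ is a nonzero $\mb$-submodule of $V$, hence $d_l^2$ is bijective --- a fact the paper never needs --- which lets you realize each $x^n\otimes\mathrm{id}$ as the intrinsic composition $\pi_{2l+n}\pi_{2l}^{-1}$ of leading-coefficient operators; thus $\Theta$ is $\cd$-linear globally, and the isotypic decomposition $T(P,V)\cong P^{(\dim V)}$ together with $\mathrm{End}_{\cd}(P)=\C$ forces $P\cong P'$ and $\Theta=\mathrm{id}_P\otimes f$ in one stroke. Your route buys a sharper conclusion (every isomorphism has the form $\mathrm{id}_P\otimes f$ with $f$ an $\mb$-isomorphism) and avoids the paper's delicate single-vector bookkeeping, which in the printed version suffers from index/prime typos; both arguments ultimately rest on the same two pillars, the leading-term computation and Schur's lemma/density for simple $\cd$-modules (your appeal to $\mathrm{End}_{\cd}(P)=\C$ is Quillen's lemma, which the paper's density step also implicitly requires). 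Two harmless slips: the identity should read $x^n\otimes\mathrm{id}=\pi_{2l+n}\pi_{2l}^{-1}$ with no factor $c^{-1}$, since the constants $c$ cancel; and in the degree-comparison step you should take $k\geq\max(2l,2s)$ (possible since $k$ may be arbitrarily large) so that the polynomial expansion of degree at most $2s+2$ is valid on the target side before you conclude $2l+2\leq 2s+2$.
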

\begin{proof}The sufficiency is obvious. Now suppose that
$$\psi: T(P, V) \rightarrow T(P',V')$$ is an isomorphism of $\W$-modules.
Let $p\otimes v$ be a nonzero element in $T(P, V)$.  Write
$$\psi(p\otimes v)=\sum_{n=0}^q p'_i\otimes v'_i\in T(P',V'),$$
where $p_0',\dots,p'_q$ are linearly independent.
Similar to  the proof of Claim 1 of Theorem \ref{simple}, comparing the
the highest degree of $m$ on both sides of  $$\psi(d_{k-m}d_m(p\otimes v))=d_{k-m}d_m\psi(p\otimes v),$$
we have that $l=s$ and
$$ \psi(Xp\otimes v)= \sum_{n=0}^q  X p_n\otimes d_l^2v_n, \ \ \forall\ X\in \cd.$$
By the Jacobson density theorem, there exists $Y\in \cd$ such that  $Yp_i=\delta_{i 0}p_0$. Then
$$ \psi(Yp\otimes v)=p_0\otimes d_l^2v_n.$$
Replacing $Yp$ by $p$, $p_0$ by $p'$ and $d_l^4v_n$ by $v'$, we have
$$ \psi(Xp\otimes v)=Xp'\otimes v', \ \ \forall\ X\in \cd.$$

Consequently $\psi_1: P\rightarrow P'$ satisfying $\psi_1(Xp)=Xp'$ is a well-defined map. Since $P, P'$ are simple $\cd$-modules,
$$\text{Ann}_{\cd}(p)=\text{Ann}_{\cd}(p'), \ \text{and}\  P\cong\cd/\text{Ann}_{\cd}(p)\cong P'. $$

Thus $\psi_1$ is a $\cd$-module isomorphism and
$$\psi(p\otimes v)=\psi_{1}(p)\otimes v', \ \ \forall\ p\in P.$$

Then from $\psi(d_m(p\otimes v))=d_m\psi(p\otimes v)$, we obtain that
$$\psi(p\otimes d_r v)=\psi_{1}(p)\otimes d_r v', \ \ \forall\ p\in P, \  r\in \Z_{\geq 0}.$$

So $$\psi(p\otimes y v)=\psi_{1}(p)\otimes y v', \ \ \forall\ p\in P, \forall\ y\in U(\mb).$$

Therefore we have $\text{Ann}_{U(\mb)}(v)=\text{Ann}_{U(\mb)}(v')$. The simplicity of $V$ and $V'$ implies that
$V\cong U(\mb)/\text{Ann}_{U(\mb)}(v)\cong V'$.

\end{proof}

\begin{remark} For each $r>0$, denote the quotient algebra $\mb/\langle d_{r+i}: i>0\rangle$ by $\mathfrak{a}_r$.
From Theorem \ref{simple}, we know that, to obtain new simple  $\W$-modules $T(P, V)$, it is enough to construct
infinite dimensional simple modules $V$ over $\mathfrak{a}_r$
for $r>0$ such that the action of $d_r$ on $V$ is injective.
Simple modules over $\mathfrak{a}_1$ and $\mathfrak{a}_2$ were  classified in \cite{MZ}.
\end{remark}

\vspace{4mm}

 \noindent G.L.: School of Mathematics and Statistics,
and  Institute of Contemporary Mathematics,
Henan University, Kaifeng 475004, China. Email: liugenqiang@henu.edu.cn

\vspace{0.2cm}

 \noindent M.L.: School of Mathematics and Statistics,
Henan University, Kaifeng 475004, China. Email: 13849167669@163.com

\end{document}